\newtheorem{descrip}{Description}
\newtheorem{teorema}{Theorem}
\newtheorem{definicion}{Definition}
\newtheorem{lema}{Lemma}
\newtheorem{remark}{Remark}
\def\disp{\displaystyle}
\def\adin{\alpha_{2in}}
\def\auout{\alpha_{1out}}
\def\atin{\alpha_{3in}}
\def\adout{\alpha_{2out}}
\def\fidin{\varphi_{2in}}
\def\fidout{\varphi_{2out}}
\title{A boundary-oriented reduced Schwarz domain decomposition technique for parametric advection-diffusion problems}
\begin{document}

\author{
 Manuel Bernardino del Pino \thanks{Departamento EDAN \& IMUS, Universidad de Sevilla, Spain  {\tt manubernipino@hotmail.com}}, \,
Tom\'as Chac\'on Rebollo \thanks{Departamento EDAN \& IMUS, Universidad de Sevilla, Spain  {\tt chacon@us.es}}, \,
Macarena G\'omez M\'armol \thanks{Departamento EDAN, Universidad de Sevilla, Spain  {\tt macarena@us.es}}}

\maketitle

\begin{abstract}
We present in this paper the results of a research motivated by the need of a very fast solution of thermal flow in solar receivers. These receivers are composed by a large number of parallel pipes with the same geometry.  We have introduced a reduced Schwarz algorithm that skips the computation in a large part of the pipes. The computation of the temperature in the skep domain is replaced by a reduced mapping that provides the transmission conditions. This reduced mapping is computed in an off-line stage.  We have performed an error analysis of the reduced Schwarz algorithm, proving that the error is bounded in terms of the linearly decreasing error of the standard Schwarz algorithm, plus the error stemming from the reduction of the trace mapping. The last error is asymptotically dominant in the Schwarz iterative process. We obtain $L^2$ errors below $2\%$ with relatively small overlapping lengths.
\end{abstract}

\section{Introduction}

This work is motivated by the need of building fast computational solvers for thermal flows within concentrated solar power (CSP) devices. Solar heat receivers for CSP nowadays are standard in sustainable energy production, by heating either air or molten salts with solar radiation collected by parabolic mirrors. Providing fast accurate numerical solvers of the thermal flow within these receivers is of paramount importance to optimise their design in order to ma\-xi\-mi\-se their thermal energy production. The solar heat receivers are composed by a large number of parallel pipes (several tens) with the same geometry, and connected flow at inflow (distributor) and outflow (collector) boundaries.  \\

We here focus the reduction of the computational cost involved, by a combined Reduced Order Modelling (ROM) - Domain Decomposition Method (DDM) strategy.  We use a reduced version of the alternating DDM, introduced by Schwarz in 1869  (cf. \cite{Schwarz}) for a Poisson problem on a domain composed of two simple ones, namely a disc and a rectangle. Its convergence was proved by P. L. Lions in \cite{Lions}. A thorough analysis and extensions to several DDMs with increased complexity can be found in the book of Tarek \cite{tarek}.\\

The DD-ROMs that have been built up to date, to the best of the authors' knowledge, focus on the computation of the unknown (in our case the temperature) in the full targeted domain. Let us mention, without intending to be exhaustive, the works \cite{baiges, baiyengar, li,nguyen, reyes, pasini} that deal with DD-ROMs for several kinds of PDE-governed models.  However, for solar receivers only the heat transported by the outflow is really needed to compute the overall thermal energy produced. \\

We introduce in this paper a reduced Schwarz algorithm that skips the computation in a large part of the domain, requiring only the solution of the advection-diffusion equation in two small domains, close to the inflow and outflow boundaries. The computation of the temperature in the skep domain is replaced by a reduced mapping yielding the transmission conditions between the resolved subdomains. This reduced mapping is computed in an off-line stage. This stage uses the information of a number of full-order Schwarz algorithm runs to build reduced spaces to approximate the traces via POD analysis. An artificial neuronal network is used to compute the input $\mapsto$ output mapping of the latent variables.\\

We have performed an error analysis of the reduced Schwarz algorithm for quite general linear advection-reaction-diffusion equations, proving that the error is bounded in terms of the linearly decreasing error of the standard Schwarz algorithm, plus the error stemming from the reduction of the trace mapping, which is asymptotically dominant in the Schwarz iterative process.\\

We have obtained a reduction of the errors, without needing to increase the overlapping, by two procedures. On one hand, by enriching the training data on the discrete trace mapping. On another hand,  by POD analysis with respect to stronger norms than the $L^2$ norm on the active boundaries in the Schwarz iterative process, actually with respect to discrete $H^1$ norms.\\

The paper is organised as follows. We describe the structure of the reduced Schwarz algorithm for advection-diffusion equations in Section \ref{algoritmo}. In Section \ref{se:redtran} we build the reduced mapping that yields the transmission conditions, for multi-dimensional advection-diffusion equations. Section \ref{se:error} carries on the error analysis, while Section \ref{se:numex} presents the results of several numerical tests. Finally Section \ref{se:concl} addresses some conclusions and future extensions of this work.
\section{A reduced Schwarz algoritm} \label{algoritmo}
\def\Gin{\Gamma_{in}}
\def\Gout{\Gamma_{out}}
\def\Gad{\Gamma_{ad}}
\def\Guup{\Gamma_{1up}}
\def\Gudown{\Gamma_{1down}}
\def\Guout{{\Gamma_{1out}}}
\def\Gtup{\Gamma_{3up}}
\def\Gtin{{\Gamma_{3in}}}
\def\Gtdown{\Gamma_{3down}}
\def\Gdin{{\Gamma_{2in}}}
\def\Gdout{{\Gamma_{2out}}}
\def\Gdup{\Gamma_{2up}}
\def\Gddown{\Gamma_{2down}}
\def\Guside{\Gamma_{1side}}
\def\Gtside{\Gamma_{3side}}
\def\Gdside{\Gamma_{2side}}

We consider a parametric advection-diffusion model problem that aims to model the heat flux along a pipe $\Omega= \omega_1 \times (0,L)\subset \mathbb{R}^d$ (with $d=1,\, d=2$ or $d=3$), where $\omega_1 $ is a bounded open subset of  $\mathbb{R}^{d-1}$ ($\omega_1=\emptyset$ when $d=1$). The parameter is the P\'eclet number $Pe$, that measures the relative strengths of inertial and diffusion forces. The domain $\Omega$ is assumed to have typically with a very large aspect ratio $L/\mbox{Diameter}(\omega_1)$. We assume the boundary of $\Omega$ to be Lipschitz continuous, and split into inflow, outflow and adiabatic boundaries respectively named $\Gin$, $\Gout$ and $\Gad$:
\begin{equation} \label{problema}
\left\{\begin{array}{lll}
\mathcal{L}(u,Pe) =f \text{ in } \Omega,\\
 u=0 \text{ on } \Gad,\\
 u=g \text{ on } \Gin, \\
\partial_n u=0 \text{ on } \Gout,
\end{array}\right.
\end{equation}
where $\mathcal{L}$ is the elliptic operator defined by $\mathcal{L}(u,Pe):=-\varepsilon\Delta u+\beta\cdot \nabla u$, $\beta\in\mathds{R}^d$ is the velocity, that we assume constant, satisfying $\beta \cdot \mathbf{n}\ge 0$ on $\Gout$, where $\mathbf{n}$ is the outer normal to $\partial \Omega$, $\varepsilon>0$ is the eddy diffusivity of the medium, and $f$, $g$ are the source and boundary data. If $f\in H^1(\Omega)'$ and $g \in H^{1/2}(\Gin)$ this problem admits a unique weak solution $u \in H^1_D(\Omega)=\{ v \in H^1(\Omega)\,|\, v_{|_{\Gamma_{ad}}}=0\,\} $ by the Lax-Milgram's Lemma. \\

Our objective is to parametrically solve this problem by reduced order methods when the P\'eclet number varies in a bounded interval $\cal D$. Since for multi-dimensional problems the P\'eclet \lq\lq number" is a vector of $d$ components, we consider a fixed $\beta \in\mathds{R}^d$ and set
$$
Pe = \frac{|\beta|\,H}{\varepsilon} 
$$
where $|\beta|$ is the euclidean norm of $\beta$ and $H=\mbox{Diameter}(\omega_1)$. 

 To describe the alternating Schwarz method that we consider, we perform a geometrical de\-com\-po\-si\-tion of the domain $\Omega$ into three overlapping subdomains $\Omega_1,\, \Omega_2$ and $\Omega_3,$ in the form $\Omega_1= \omega_1 \times (0,L_2)$, $\Omega_2= \omega_1 \times (L_1,L_4)$, $\Omega_3= \omega_1 \times (L_3,L)$ with $0<L_1 <L_2 <L_3 < L_4<L$ (see Figure \ref{fig:midibujo}). In this way,
\begin{equation*}
    \Omega=\text{int}\,(\overline{\Omega_1\cup \Omega_2 \cup \Omega_3}),\, \Omega_1 \cap \Omega_2= \omega_1 \times (L_1,L_2),\, \Omega_2 \cap \Omega_3= \omega_1 \times (L_3,L_4),\,\mbox{and  }\, \Omega_1 \cap \Omega_3=\emptyset.
\end{equation*}
Our purpose is to parameterise by reduced order modelling the solution (more concretely, the boundary transmission conditions) in $\Omega_2$ in the off-line stage, and to only compute the solution in $\Omega_1$ and $\Omega_3$ in the online stage. To reduce the computing time, we assume that the length of $\Omega_2$ is much larger than that of $\Omega_1$ and $\Omega_3$ and that the overlappings are of small size. 

We define the internal boundaries by
\begin{equation*}
\Gdin=\overline{\partial{\Omega_2} \cap \Omega_1}, \quad \Guout=\overline{\partial{\Omega_1} \cap \Omega_2}, \quad 
\Gtin=\overline{\partial{\Omega_3} \cap \Omega_2}, \quad 
\Gdout=\overline{\partial{\Omega_2} \cap \Omega_3}, \quad 
\end{equation*}
and the side boundaries by
$$
\Gamma_{1side}=\partial \omega_1\times [0,L_2],\quad\Gamma_{2side}=\partial \omega_1\times [L_1,L_4],\quad\Gamma_{3side}=\partial \omega_1\times [L1,L_4].
$$ 
 The boundaries $\Gdin$,  $\Guout$, $\Gtin$ and $\Gdout$ are the interfaces of the Schwarz domain decomposition method that we next describe. We denote $u^k_i$ the $k$-th iterate determined by the method in the domain $\Omega_i$, $i=1,2,3$:\\

\begin{figure}[h!]
    \centering
    \includegraphics[width=14cm]{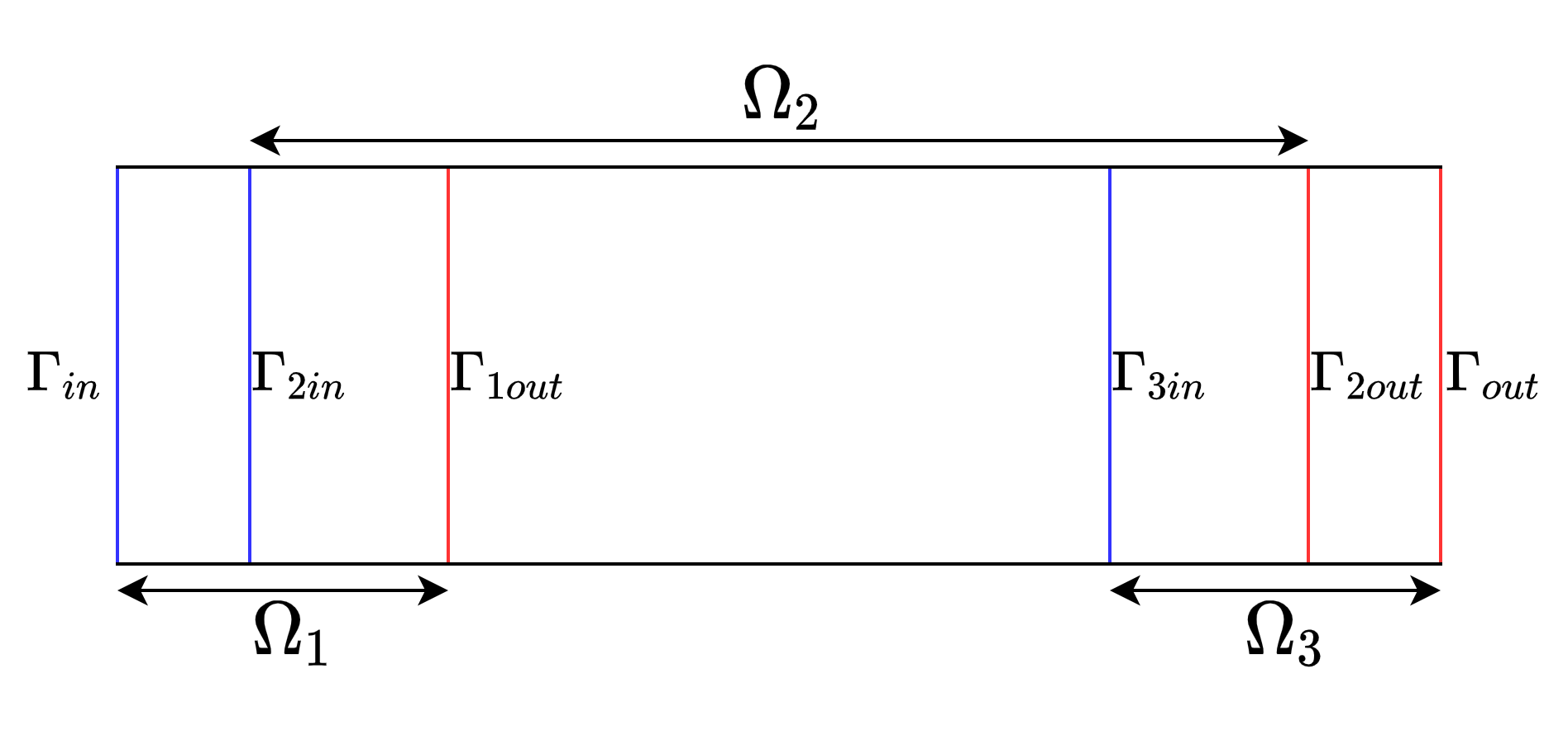}
    \caption{Decomposition of the domain $\Omega$ into three overlapping subdomains.}
    \label{fig:midibujo}
\end{figure}

Given a initial condition $u_2^0$ on $\Guout$ and $\Gtin$, we solve in $\Omega_1$, $\Omega_3$, 
\begin{equation}\label{ecomega13}
\left\{\begin{array}{lll}
\mathcal{L}(u_1^{k+1},Pe) =f \text{ in } \Omega_1,\\
 u_1^{k+1}=0 \text{ on } \Guside,\\
 u_1^{k+1}=g \text{ on } \Gin, \\
 u_1^{k+1}=u_2^k \text{ on } \Guout,
\end{array}\right.
\qquad 
\left\{\begin{array}{lll}
\mathcal{L}(u_3^{k+1},Pe) =f \text{ in } \Omega_3,\\
 u_3^{k+1}=0 \text{ on } \Gtside, \\
 u_3^{k+1}=u_2^k \text{ on } \Gtin,\\
\partial_n u_3^{k+1}=0  \text{ on } \Gout,
\end{array}\right.
\end{equation}
and we then solve in $\Omega_2$:
\begin{equation} \label{ecomega2}
\left\{\begin{array}{lll}
\mathcal{L}(u_2^{k+1},Pe) =f \text{ in } \Omega_2,\\
 u_2^{k+1}=0 \text{ on } \Gdside,\\
 u_2^{k+1}=u_1^{k+1} \text{ on } \Gdin, \\
 u_2^{k+1}=u_3^{k+1}\text{ on } \Gdout.
\end{array}\right.
\end{equation}
As we only are interested in knowing the amount of thermal energy at the outflow boundary, we do not need to compute the temperature $u$ in $\Omega_2$. If some reduced approximation $\tilde \tau$ of the parametric mapping 
$$
\tau:\,(u_{2_{|_\Gdin}}^{k},u_{2_{|_\Gdout}}^{k})(Pe) \in H^{1/2}(\Gdin)\times H^{1/2}(\Gdout)\mapsto (u_{2_{|_\Guout}}^{k},u_{2_{|_\Gtin}}^{k}) \in H^{1/2}(\Guout)\times H^{1/2}(\Gtin)$$ was known, only the computation of $u$ in the subdomains $\Omega_1$ and $\Omega_3$ would be needed to implement the Schwarz algorithm. Considering the transmission conditions on $\Gamma_{2in}$ and $\Gamma_{2out}$ in \eqref{ecomega2}, this reduced algorithm would read:

\vspace*{.3cm}\noindent
Given $u_1^{0}\in H^1(\Omega_1)$ and $u_3^0 \in H^1(\Omega_3)$ such that $u_1^{0}=0 \text{ on } \Guside$, $u_3^{0}=0 \text{ on } \Gtside$, iteratively solve 
\begin{equation} \label{algored}
\left\{\begin{array}{lll}
\mathcal{L}(u_1^{k+1},Pe) =f \text{ in } \Omega_1,\\
 u_1^{k+1}=0 \text{ on } \Guside,\\
 u_1^{k+1}=g \text{ on } \Gin, \\
 u_1^{k+1}= \tilde{\tau}_{1out}(u_1^k\big\vert_{\Gdin},u_3^k\big\vert_{\Gdout}) \text{ on } \Guout,
\end{array}\right.
\qquad 
\left\{\begin{array}{lll}
\mathcal{L}(u_3^{k+1},Pe) =f \text{ in } \Omega_3,\\
 u_3^{k+1}=0 \text{ on } \Gtside, \\
 u_3^{k+1}= \tilde{\tau}_{3in}(u_1^k\big\vert_{\Gdin},u_3^k\big\vert_{\Gdout})\,\,\text{ on } \Gtin, \\
\partial_n u_3^{k+1}=0  \text{ on } \Gout,
\end{array}\right.
\end{equation}
%
where $\tilde{\tau}_{1out}$ and $\tilde{\tau}_{3in}$ respectively denote the first and second components of $\tilde{\tau}$.
\hfill $\square$\\

This reduction procedure of the Schwarz algorithm, that we have described for the continuous problem \eqref{problema}, is readily extended to a numerical discretisation of this problem. It is enough to change the continuous by the discrete formulations of the boundary value problems \eqref{algored}. We do not describe it here in detail for brevity, although we use it in the numerical experiments section within a Galerkin finite element discretisation framework.

\subsection{One-dimensional heat flow}
To illustrate the basics of the reduced Schwarz method that we introduce, let us consider a \lq\lq toy" 1D advection-diffusion problem:

\begin{equation} \label{problema1d}
\left\{\begin{array}{lll}
\mathcal{L}(u,Pe) =f \text{ in } \Omega=(K,L),\\
 u(K)=\gamma, \quad u(L)=\mu,
\end{array}\right.
\end{equation}
where $\mathcal{L}(u,Pe):=-\varepsilon u''+\beta u',f\in\mathds{R}$ and $\beta \in \mathds{R}\setminus\{0\}$. The exact solution of this parametric problem is
\begin{equation}\label{solcd1d}
    u(x;K,L;\gamma,\mu)=\gamma+\frac{f}{\beta}(x-K)+\frac{\beta(\mu-\gamma)-f\,(L-K)}{\beta\left(e^{\frac{\beta}{\varepsilon}(L-K)}-1 \right)}\left(e^{\frac{\beta}{\varepsilon}(x-K)}-1 \right).
\end{equation}

In this case all trace spaces $H^{1/2}(\Gdin),\, H^{1/2}(\Gdout), \,H^{1/2}(\Guout)$ and $H^{1/2}(\Gtin)$ are equal to $\mathbb{R}$. Then $\tau$ is simply a mapping from $\mathbb{R}^2$ into itself,  analytically known from the above parametric expression of $u$, by 
\begin{equation}\label{solcd1d}
    \tau(\gamma,\mu)=(u(L_2;L_1,L_4;\gamma,\mu),\, u(L_3;L_1,L_4;\gamma,\mu)),\quad \forall \gamma, \mu \in \mathbb{R}.
\end{equation}
Then we may take the reduced trace mapping $\tilde \tau$ as the exact $\tau$.
In \cite{tesis} it is proved that for the Schwarz method for two subdomains, the error between consecutive iterates behaves as
$$
\sum_{i=1}^2\|u_i^{(k+1)}-u_i^{(k)}\|_{H^1(\Omega_i)}\simeq C\, e^{-2\delta Pe},
$$
where $\delta$ is the overlapping length. If we consider that $e^{(k)}\simeq C\,\rho^{-k}$, with
\begin{equation*}
 {e}^{(k)}:=\sum_{i=1}^2\|u_i^{(k+1)}-u_i^{(k)}\|_{H^1(\Omega_i)},\quad \rho=e^{2\delta Pe},
\end{equation*}
then $\rho^{(k)}:={e}^{(k)}/{e}^{(k+1)}$ is approximately equal to $e^{2\delta Pe}$, and then $\log(\rho^{(k)})/(2\,\delta)$ should approximately be equal to $Pe$. In Table \ref{tab1:1D} we check that the estimation given for two subdomains also is valid in the case of three subdomains. The convergence is extremely fast yet for moderate values of $Pe$ and $\delta$.

\begin{table}[ht]
\begin{center}
\begin{tabular}{| c | c | c | c|}
\hline
$Pe$ & $\delta$ & $\rho^{(k)}$ & $\log(\rho^{(k)})/2\delta$ \\ \hline
 & 1 & 6.8492 & 0.9620 \\
1 & 2 & 46.6656 & 0.9607 \\
 & 4 & 2.1464E+3 & 0.9589 \\
\hline
 & 1 & 46.6414 & 1.9212\\
2 & 2 & 2.1407E+3 & 1.9172 \\
 & 4 & 5.4006E+6 & 1.9377 \\
 \hline
  & 1 & 3.1626E+2 & 2.8782\\
3 & 2 & 9.7820E+4 & 2.8727 \\
 & 4 & 8.8299E+9 & 2.8626 \\
 \hline 
  & 1 & 2.1405E+3 & 3.8343\\
4 & 2 & 5.2806E+6 & 3.8698 \\
 & 4 & 2.6062E+13 & 3.8614 \\
 \hline
\end{tabular}
\caption{One-dimensional advection-diffusion operator. Numerical convergence rates of Schwarz algorithm with 3 overlapping subdomains.}
\label{tab1:1D}
\end{center}
\end{table}


As the trace mapping $\tau$ is exact, then the reduced Schwarz algorithm provides the same accuracy on $\Omega_1$ and $\Omega_3$, whitout needing to compute the solution on the large domain $\Omega_2$.

\section{Multi-dimensional advection-diffusion equations}\label{se:redtran}
In this section we study the construction of approximate reduced trace mappings $\tilde \tau$ for multi-dimensional advection-diffusion equations in an off-line stage. 

For multi-dimensional advection-diffusion equations, the sets 
$${\cal V}_{2in}=\{u_{2_{|_\Gdin}}^{k}(Pe)\, |\,Pe \in {\cal D},\,k=1,2,\cdots\}, \,\,{\cal V}_{2out}=\{u_{2_{|_\Gdout}}^{k}(Pe)\, |\,Pe \in {\cal D},\,k=1,2,\cdots\},
$$
$${\cal V}_{1out}=\{u_{2_{|_\Guout}}^{k}(Pe)\, |\,Pe \in {\cal D},\,k=1,2,\cdots\},\,\,{\cal V}_{3in}=\{u_{2_{|_\Gtin}}^{k}(Pe)\, |\,Pe \in {\cal D},\,k=1,2,\cdots\},
$$ respectively lie in varieties of $H^{1/2}(\Gdin)$, $H^{1/2}(\Guout)$, $H^{1/2}(\Gtin)$ and $H^{1/2}(\Gdout)$. As the Schwarz algorithm converges in $H^1(\Omega)$ norm, then the set ${\cal V}_{2in}$, for instance, is bounded in $H^{1/2}(\Gdin)$. Then, as the injection of $H^{1/2}(\Gdin)$ into $L^2(\Gdin)$ is compact, ${\cal V}_{2in}$ can be uniformly approximated by a finite-dimensional sub-space of $L^2(\Gdin)$ in $L^2(\Gdin)$ norm (cf. \cite{ocho}).

We intend to approximate these varieties by linear subspaces of these trace spaces, by means of a reduced order modelling procedure in an off-line stage. Then the mapping $\tau$ will be approximated by a transformation between the latent parameters of their representations in these linear subspaces. \\

The off-line stage consists of three steps, that we describe next:\\

{\bf Step 1: Construction of POD basis of the trace varieties.} 

{\bf a) }Consider a training finite set ${\cal D}_{train}\subset {\cal D}$ of given P\'eclet numbers. Given an initial condition $u^0\in H^1_D(\Omega)$, compute the full-order Schwarz method by solving \eqref{ecomega13}-\eqref{ecomega2} for each $Pe \in {\cal D}_{train}$ until some preset error threshold $\varepsilon$ between consecutive iterates is reached, that is, until
$$
\|u_i^{k^*+1}-u_i^{k^*}\|_{H^1(\Omega_i)} 
 < \varepsilon, \quad i=1,2,3
 $$
 for some $k^* \ge 1$. Construct the sets
 $${\cal V}_{2in}^*=\{u_{2_{|_\Gdin}}^{k}(Pe)\, |\,Pe \in {\cal D},\,k=1,2,\cdots,k^*\}, \,\,{\cal V}_{2out}^*=\{u_{2_{|_\Gdout}}^{k}(Pe)\, |\,Pe \in {\cal D},\,k=1,2,\cdots,k^*\},
$$
$${\cal V}_{1out}^*=\{u_{2_{|_\Guout}}^{k}(Pe)\, |\,Pe \in {\cal D},\,k=1,2,\cdots,k^*\},\,\,{\cal V}_{3in}^*=\{u_{2_{|_\Gtin}}^{k}(Pe)\, |\,Pe \in {\cal D},\,k=1,2,\cdots,k^*\},
$$
 {\bf b) }By Proper Orthogonal Decomposition (POD) of each of the sets ${\cal V}_{r}^*$, \break $r=2in, \, 2out,\, 1out, \, 3in$, with respect to some inner product $(\cdot,\cdot)_{\Gamma_r}$ in $H^{1/2}(\Gamma_r)$, build orthogonal subsets $\{\varphi^{(k)}_r, k=1,\cdots,\ell_r\,\}_{r}$ with respect to this inner product, in such a way that the rate of energy kept by the space spanned by these sets is above $1-\sigma$, for some small $\sigma >0$.\\
 
 {\bf Step 2. Construction of POD trace expansions coefficients.} 
 
  {\bf a) }For each $r =2in, \, 2out,\, 1out, \, 3in$, find the POD coefficients of all elements $u_{2_{|_{\Gamma_r}}}^{k}(Pe)$ of ${\cal V}_{r}^*$ with respect to the basis $\{\varphi^{(j)}_r, j=1,\cdots,\ell_r\,\}_{r}$, that is
 \begin{equation}\label{podcoefs1}
 u_{2_{|_{\Gamma_r}}}^{k}(Pe) \simeq \sum_{j=1}^{\ell_r} \alpha^{(j)}_r(k,Pe) \,\varphi^{(j)}_r.
 \end{equation}
 \\
  {\bf b) }To enrich the training set, locate the POD coefficients of the elements of  ${\cal V}_{2in}^*$ and ${\cal V}_{2out}^*$ between a range of minimum and maximum values, that is, $\alpha_r^{(j)}\in [\beta_r^{(j)},\mu_r^{(j)}]$ for $r=2in,\,2out$ and $j=1,\ldots,\ell_r$. Subsequently, consider a training subset $\mathcal{D}_r^{(j)}$ of each of the intervals $[\beta_r^{(j)},\mu_r^{(j)}]$, and solve in $\Omega_2$,  for each $\alpha_r^{(j)}\in \mathcal{D}_r^{(j)}$ and for each $Pe\in \mathcal{\tilde{D}}$.
as follows:
\begin{equation}\label{pbref2}
\left\{\begin{array}{lll}
\mathcal{L}(u_2,Pe) =f \text{ in } \Omega_2,\\
 u_2=0 \text{ on } \Gdside,\\
 u_2= \disp\sum_{j=1}^{\ell_{2in}} \adin^{(j)}\fidin^{(j)} \text{ on } \Gdin, \\
 u_2= \disp\sum_{j=1}^{\ell_{2out}} \adout^{(j)}\fidout^{(j)}\text{ on } \Gdout,
\end{array}\right.
\end{equation}
and represent the traces of $u_2$ in $\Guout$ and $\Gtin$, in their respective POD bases. Joining these coefficients with those already obtained by \eqref{podcoefs1}, obtain a set of coefficients associated to each POD basis index $j=1,\cdots,\ell_r$ of each boundary $\Gamma_r$,  $r=2in, \, 2out,\, 1out, \, 3in$: $\vec{v}^{(j)}
_r =\left \{\tilde{\alpha}^{(j,r)}_m \right\}_{m=1}^{L_r} \in \mathbb{R}^{L_{r}}$. \\
  \\
 {\bf Step 3. Construction of reduced trace mappings.} 
 
 By some supervised machine learning or artificial neural network approximation, compute respective extensions $\tilde{\tau}^{(j)}_{1out}$ and $\tilde{\tau}^{(j)}_{3in}$ of each of the mappings given by
 $$
 \left(\vec{v}^{(1)}_{2in},\vec{v}^{(2)}_{2in},\cdots ,\vec{v}^{(\ell_{2in})}_{2in}; \vec{v}^{(1)}_{2out},\vec{v}^{(2)}_{2out},\cdots ,\vec{v}^{(\ell_{2out})}_{2out}\right) \mapsto \vec{v}^{(j)}_{1out},\,\,\mbox{for}\,\, j=1,\cdots, \ell_{1out},
  $$
$$
\left(\vec{v}^{(1)}_{2in},\vec{v}^{(2)}_{2in},\cdots ,\vec{v}^{(\ell_{2in})}_{2in}; \vec{v}^{(1)}_{2out},\vec{v}^{(2)}_{2out},\cdots ,\vec{v}^{(\ell_{2out})}_{2out}\right) \mapsto \vec{v}^{(j)}_{3in},\,\,\mbox{for}\,\, j=1,\cdots, \ell_{3in}.
  $$
  
  \hfill$\square$

The mappings $\tilde{\tau}_1^{(j,1out)}$ and $\tilde{\tau}_2^{(j,3in)}$ transform the POD coefficients of the Dirichlet data in the sets ${\cal V}_{2in}$ and  ${\cal V}_{2out}$ into approximations of the POD coefficients of the traces of $u_2$ in $\Guout$ and $\Gtin$,
$$
 \tilde{\tau}^{(j)}_{1out}:\left(\adin^{(1)},\adin^{(2)},\cdots ,\adin^{(l_{2in})}; \adout^{(1)},\adout^{(2)},\cdots ,\adout^{(l_{2out})}\right) \mapsto \alpha^{(j)}_{1out},\,\,\mbox{for}\,\, j=1,\cdots, \ell_{1out};  $$
 $$
 \tilde{\tau}^{(j)}_{3in}:\left(\adin^{(1)},\adin^{(2)},\cdots ,\adin^{(l_{2in})}; \adout^{(1)},\adout^{(2)},\cdots ,\adout^{(l_{2out})}\right) \mapsto \alpha^{(j)}_{3in},\,\,\mbox{for}\,\, j=1,\cdots, \ell_{3in}.
  $$
This allows to compute an approximation, that we denote by $\tilde{\tau}$, of the mapping between traces $\tau$ as follows: Given $(u_{2_{|_\Gdin}},u_{2_{|_\Gdout}}) \in H^{1/2}(\Gdin)\times H^{1/2}(\Gdout)$, we set
$$
 \tilde{ \tau}\left (u_2\big\vert_{\Gdin},u_2\big\vert_{\Gdout}\right ) = \left (\sum_{j=1}^{\ell_{1out}}\alpha^{(j)}_{1out}\, \varphi^{(j)}_{1out},\, \sum_{j=1}^{\ell_{3in}}\alpha^{(j)}_{3in}\, \varphi^{(j)}_{3in}\right ) \in H^{1/2}(\Guout)\times H^{1/2}(\Gtin);
  $$
  where
 $$
\alpha^{(j)}_{r}= \tilde{\tau}^{(j)}_{r}\left(\adin^{(1)},\adin^{(2)},\cdots ,\adin^{(l_{2in})}; \adout^{(1)},\adout^{(2)},\cdots ,\adout^{(l_{2out})}\right)  ,\,\,\mbox{for}\,\, j=1,\cdots, \ell_{r},\,\, r=1out,\, 3in;  $$
  with
  $$
\alpha^{(j)}_r=(u_{2_{|_{\Gamma_r}}}, \varphi_{r}^{(j)})_{\Gamma_r},\, \,\,\mbox{for}\,\,j=1,\cdots, \ell_{2r},\,r=2in,\,2out.$$\hfill $\square$\\
 

Observe that this off-line procedure intends to reduce the full Schwarz iterative process, rather than reducing the solution in each sub-domain and then applying the standard Schwarz algorithm, just replacing the full-order discretisation spaces by the reduced ones. Actually, it is possible to build reduced spaces approximating the solutions in $\Omega_1$ and $\Omega_3$ and then to combine both reduction procedures.

\section{Error analysis} \label{se:error}
We next perform the error analysis of the reduced Schwarz method introduced in Section \ref{algoritmo}. We prove that if the error in the approximation of the mapping $\tau$ by the mapping $\tilde{\tau}$ is small enough, then the error due to the Schwarz method is bounded by the error made in this approximation, plus the error due to the unperturbed method.

 The error analysis actually applies to a general advection-diffusion-reaction problem,

\begin{equation}\label{eqprincipal}
\left\{\begin{array}{ccc}
\mathcal{L}u \equiv -\nabla \cdot (a\,\nabla u)+\nabla\cdot (\beta\, u)+c\,u =f \text{ in } \Omega\\
 u=0 \text{ on } \Gamma_D \\
 \textbf{n}\cdot (a\nabla u)+\gamma\,u=g_{N} \text{ on } \Gamma_N,
\end{array}\right.
\end{equation}
on a bounded domain $\Omega \subset \mathds{R}^d$ (where $d\ge 1$ is an integer number), with Lipschitz-continuous boundary $\partial \Omega$, with outer normal $\textbf{n}$; Dirichlet boundary  $\Gamma_D \subset \partial\Omega$ of positive $d-1$-dimensional measure, and natural (Neumann or Robin) boundary $\Gamma_N\subset \partial\Omega$ where $\overline{\Gamma_D} \cup \overline{\Gamma_N}=\partial\Omega$ and $\Gamma_D \cap \Gamma_N=\emptyset$. We shall assume that the diffusion coefficient $a \in L^\infty(\Omega)$, the driving velocity $\beta(x)\in L^\infty(\Omega)^d$, the reaction rate $c(x) \in L^\infty(\Omega)$ and the Robin condition coefficient $\gamma \in L^\infty(\Gamma_N)$ satisfy:
\begin{equation*}
    a_0|\xi|^2\leq \xi^{T}a(x)\xi \leq a_1|\xi|^2, \, \, \,\forall \,\xi\in \mathds{R}^d \mbox{  a. e.}\, x\in \Omega,\,\,    
\end{equation*}
for some $a_1\ge a_0 >0$;
\begin{equation*}
     \nabla \cdot \beta=0,\,\, c \ge 0 \, \mbox{  a. e. in}\, \Omega,\,\, \gamma \ge 0 \, \mbox{  a. e. in}\, \Gamma_N.
\end{equation*}
This ensures the coercivity of problem \eqref{eqprincipal}. We further assume data $f \in \left(H^{1}(\Omega)\right )'$, $g_N \in H^{-1/2}(\Gamma_N)$.
\\
We look for a weak solution of problem  in the subspace $V $ of $H^1(\Omega)$ defined by
\begin{equation*}
    V:=\{v\in H^1(\Omega) : v=0 \text{ on } \Gamma_D \}.
\end{equation*}
The variational formulation of \eqref{eqprincipal} seeks for a solution $u\in V$ that satisfies
\begin{equation} \label{weakform}
    \mathcal{A}(u,v)=F(v), \quad \forall v\in V,
\end{equation}
where $\mathcal{A}(\cdot,\cdot), \, F(\cdot)$ are defined by:
\begin{equation*}
    \mathcal{A}(u,v) := \int_{\Omega} (a\,\nabla u\cdot \nabla v+\nabla\cdot(\beta\, u) v +c\,u v)\, dx + \int_{\Gamma_N} \gamma\, u \,v \,ds(x), \text{ for } u,\,v\in V.
\end{equation*}
\begin{equation*}
    F(v) := \langle f,\, v \rangle_\Omega + \langle g_N,\, v \rangle_{\Gamma_N}, \text{ for } v\in V.
\end{equation*}
It is standard that by Lax-Milgram's Lemma, problem (\ref{weakform}) admits a unique solution.

Let us consider a non-overlapping $\Sigma_1,\ldots,\Sigma_p$ and an overlapping $\Omega_1,\ldots,\Omega_p$ decomposition of  $\Omega$ into open subsets, in the sense that $\overline\Omega=\displaystyle \overline{\Sigma_1\cup\ldots\cup\Sigma_p}$ and the intersection of two different $\Sigma_i$ is either the empty set or a set with zero measure in $\mathds{R}^d$, while $\overline\Omega=\displaystyle \overline{\Omega_1\cup\ldots\cup\Omega_p}$ and the intersection of two different $\Omega_i$ is either the empty set or a set with positive measure. We assume that these two decomposition are related by $\Sigma_i \subset \Omega_i$, $i=1,\cdots,p$. Let $\Gamma^{(i)}:=\partial \Omega_i\cap \Omega$ and $\Gamma_{[i]}:=\partial \Omega_i\cap \partial \Omega$ denote the interior and exterior boundary segments of $\Omega_i$. \\

Within the settings of Section \ref{algoritmo}, $p=3$, $\Sigma_1=\omega_1 \times (0,L_1)$, $\Sigma_2=\omega_1 \times (L_1,L_3)$, $\Sigma_3=\omega_1 \times (L_3,L)$, while $\Gamma^{(1)}=\Guout$, $\Gamma^{(2)}=\Gdin\cup \Gdout$, $\Gamma^{(3)}=\Gtin$. We adapt this notation to ease the error analysis.\\

Following \cite{tarek}, the sequential Schwarz alternating method for the solution of \eqref{eqprincipal} may be stated as follows. Each iteration (or sweep) will consist of $p$ fractional steps. We denote the iterate in the $i$'th fractional step of the $k$'th sweep as $w^{(k+\frac{i}{p})}$. Given $w^{(k+\frac{i-1}{p})}$ the next iterate $w^{(k+\frac{i}{p})}$ is the variational solution of

\begin{equation}\label{eqprincipal2}
\left\{\begin{array}{cccc}
 -\nabla \cdot \left(a\,\nabla w^{(k+\frac{i}{p})}\right)+\cdot(\beta\, \nabla w^{(k+\frac{i}{p})})+c\,w^{(k+\frac{i}{p})} =f \text{ in } \Omega_i\\
 \textbf{n}\cdot \left(a\nabla w^{(k+\frac{i}{p})}\right)+\gamma\,w^{(k+\frac{i}{p})}=g_{N} \text{ on } \Gamma_{[i]}\cap\Gamma_N, \\
 w^{(k+\frac{i}{p})}=w^{(k+\frac{i-1}{p})} \text{ on } \Gamma^{(i)},  \\
 w^{(k+\frac{i}{p})}=0 \text{ on } \Gamma_{[i]}\cap\Gamma_D.
\end{array}\right.
\end{equation}
The local solution $ w^{(k+\frac{i-1}{p})}$ is then extended outside $\Omega_i$ as follows:
\begin{equation*}
     w^{(k+\frac{i}{p})}\equiv  w^{(k+\frac{i-1}{p})}, \,\text{ on } \, \Omega\setminus \overline{\Omega}_i.
\end{equation*}

The following result states the convergence of this Schwarz algorithm (cf. \cite{tarek}).

\begin{teorema}\label{convergencia1}
Under the above conditions, the iterates $w^k$ will converge geometrically to the solution $u$ with:
\begin{equation*}
    \|u-w^{(k)}\|_{H^1(\Omega)} \leq \rho^k \|u-w^{(0)}\|_{H^1(\Omega)}.
\end{equation*}
for some $\rho\in (0,1)$.
\end{teorema}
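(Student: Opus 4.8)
The plan is to recast each fractional step \eqref{eqprincipal2} as a correction inside a subspace attached to $\Omega_i$, and to bound the propagation operator of one full sweep below $1$ in an energy norm equivalent to $\|\cdot\|_{H^1(\Omega)}$. First I would split $\mathcal{A}=\mathcal{A}_s+\mathcal{A}_{ss}$ into its symmetric part $\mathcal{A}_s$ and its skew-symmetric part $\mathcal{A}_{ss}$; since $\nabla\cdot\beta=0$, the interior advection $\int_\Omega\nabla\cdot(\beta u)v\,dx$ is purely skew up to the boundary flux $\tfrac12\int_{\Gamma_N}(\beta\cdot\mathbf n)uv\,ds$, so $\mathcal{A}_s$ gathers diffusion, reaction, the Robin term and this flux, and is coercive and continuous on $V$. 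Its induced norm $\|\cdot\|_s:=\mathcal{A}_s(\cdot,\cdot)^{1/2}$ is then equivalent to $\|\cdot\|_{H^1(\Omega)}$ by the coercivity/continuity constants, reducing the final comparison of norms to this equivalence.

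Next I would pass to the error $e^{(k+\frac{i}{p})}:=u-w^{(k+\frac{i}{p})}$. Subtracting \eqref{eqprincipal2} from \eqref{weakform}, linearity cancels the data $f$ and $g_N$, so the correction $e^{(k+\frac{i}{p})}-e^{(k+\frac{i-1}{p})}$ lies in $V_i:=\{v\in V : v=0 \text{ on } \Omega\setminus\overline{\Omega_i}\}$ and satisfies $\mathcal{A}(e^{(k+\frac{i}{p})},v)=0$ for all $v\in V_i$. Hence $e^{(k+\frac{i}{p})}=(I-P_i)e^{(k+\frac{i-1}{p})}$, where $P_i:V\to V_i$ is the $\mathcal{A}$-Galerkin projection defined by $\mathcal{A}(P_iw-w,v)=0$ for all $v\in V_i$, well posed by coercivity and continuity of $\mathcal{A}$ on $V_i$. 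Composing the $p$ fractional steps gives $e^{(k+1)}=E\,e^{(k)}$ with $E:=(I-P_p)\cdots(I-P_1)$, so the theorem reduces to proving $\|E\|_s\le\rho<1$ and iterating, the passage to $\|\cdot\|_{H^1(\Omega)}$ being the norm equivalence above.

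The core estimate I would establish is a \emph{stable decomposition} in $\|\cdot\|_s$: a constant $C_0$ such that every $v\in V$ splits as $v=\sum_{i=1}^p v_i$ with $v_i\in V_i$ and $\sum_{i=1}^p\|v_i\|_s^2\le C_0\|v\|_s^2$. This follows from the overlapping covering $\overline\Omega=\overline{\Omega_1\cup\cdots\cup\Omega_p}$ by a partition of unity $\{\chi_i\}$ subordinate to $\{\Omega_i\}$, setting $v_i:=\chi_i v$; the hypothesis $\Sigma_i\subset\Omega_i$ guarantees genuine overlap, and $C_0$ then depends on $p$, on $a_0,a_1$, and on the overlap width through $\|\nabla\chi_i\|_\infty$. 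Combined with a strengthened Cauchy--Schwarz bound on the mutual interaction of the $V_i$, the abstract multiplicative Schwarz lemma of \cite{tarek} yields $\|E\|_s^2\le 1-\theta$ for some $\theta\in(0,1]$ controlled by $C_0$, whence $\rho=(1-\theta)^{1/2}<1$.

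The hard part will be the skew term $\mathcal{A}_{ss}$: relative to $\|\cdot\|_s$ the projections $P_i$ are oblique, so the Pythagorean identity $\|(I-P_i)v\|_s^2=\|v\|_s^2-\|P_iv\|_s^2$ of the symmetric theory holds only approximately and a clean monotone energy decrease may fail. I would control this by treating the convection as a bounded perturbation: continuity of the full form $\mathcal{A}$ gives $\|P_i\|_s\le\Lambda$, with $\Lambda$ depending on the ratio of the continuity to the coercivity constant of $\mathcal{A}$, so each $(I-P_i)$ stays uniformly bounded, while the strengthened Cauchy--Schwarz constants pick up a factor measuring $\mathcal{A}_{ss}$ against $\mathcal{A}_s$. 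The delicate point is to verify that this perturbed constant is \emph{strictly} below one rather than merely at most one; as in \cite{tarek}, this is secured at the continuous level by the coercivity of $\mathcal{A}$ together with the positivity $c\ge0$, $\gamma\ge0$, so that $\rho<1$ holds, with $\rho$ degrading as the advection strength grows or the overlap shrinks.
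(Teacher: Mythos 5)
The paper does not actually prove Theorem \ref{convergencia1}: it is quoted from \cite{tarek}, so there is no in-paper argument to compare against. Your first two paragraphs correctly reproduce the skeleton that both the paper and \cite{tarek} rely on, and which the paper itself records right after the theorem (Lemma \ref{propProj} and the projection formulation): the correction at each fractional step lies in $V_i$, the residual is $\mathcal{A}$-orthogonal to $V_i$, hence $e^{(k+\frac{i}{p})}=(I-P_i)e^{(k+\frac{i-1}{p})}$ and one full sweep acts on the error as $E=(I-P_p)\cdots(I-P_1)$. Up to that point the proposal is sound and consistent with how the paper sets things up.

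The gap is in your last two paragraphs, i.e.\ exactly where the theorem lives. The stable-decomposition and strengthened Cauchy--Schwarz machinery gives $\|E\|_s^2\le 1-\theta$ with $\theta>0$ only for the \emph{symmetric} coercive case; that is the hypothesis of the abstract multiplicative Schwarz lemma you invoke from \cite{tarek}. For the non-symmetric form here the $P_i$ are oblique in $\|\cdot\|_s$, and your proposed fix --- treating $\mathcal{A}_{ss}$ as a bounded perturbation --- produces a contraction only when the skew part is small relative to the symmetric part: the constant $\Lambda$ (continuity over coercivity) grows like $|\beta|/a_0$, so the perturbed bound degenerates to a number $\ge 1$ precisely in the advection-dominated regime that motivates the paper. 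Your closing claim that strictness of the bound ``is secured by the coercivity of $\mathcal{A}$ together with $c\ge0$, $\gamma\ge0$'' is an assertion, not an argument: coercivity alone does not force a product of oblique projections to contract. Two honest ways to close this: (i) state the theorem under an explicit smallness condition on the ratio of $\mathcal{A}_{ss}$ to $\mathcal{A}_s$ (equivalently, on the P\'eclet number), under which your perturbation argument does go through; or (ii) for the continuous non-symmetric problem, abandon the variational route and use the maximum principle, as in Schwarz's original argument and \cite{Lions}, which yields geometric convergence for general second-order elliptic operators with no symmetry assumption, followed by a transfer back to $H^1$. As written, the step from ``$E$ is a product of uniformly bounded oblique projections over subspaces admitting a stable decomposition'' to ``$\|E\|<1$'' is not justified for this operator.
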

The convergence factor $0<\rho<1$ generally depends on the overlap between the subdomains, the diameters \textrm{diam}($\Omega_i)$ of the subdomains and the data in \eqref{eqprincipal}.  

The updates $w^{(k+\frac{i}{p})}$ in the sequential Schwarz method can be expresed in terms of certain projection operators onto subspaces of $V$. To each $\Omega_i$ we associate a subspace $V_i$ of $V$ as:
\begin{equation*}
    V_i:=\{v\in V : v=0 \text{ in } \Omega \setminus \overline{\Omega_i}\}.
\end{equation*}
Let us consider the elliptic projection operator $P_i$ onto subspace $V_i$ of $V$ as follows.
\begin{definicion}
    Given $w\in V$ define $P_i w\in V_i$ as the solution of:
    \begin{equation*}
    \mathcal{A}(P_i w,v)=\mathcal{A}(w,v), \, \text{ for } v\in V_i.
    \end{equation*}
\end{definicion}

In \cite{tarek} it is proved that

\begin{lema} \label{propProj}
Let $u\in V$ be the solution of  \eqref{weakform}. Given $w \in V$, let $w_i \in V$ satisfying
\begin{equation}\label{partprob}
\left\{\begin{array}{cccc}
 -\nabla \cdot \left(a\,\nabla w_i\right)+\nabla\cdot(\beta\, \nabla w_i)+c\,w_i =f \text{ in } \Omega_i\\
 \textbf{n}\cdot \left(a\nabla w_i\right)+\gamma\, w_i=g_{N} \text{ on } \Gamma_{[i]}\cap\Gamma_N, \\
 w_i=w \text{ on } \Gamma^{(i)}  \\
 w_i=0 \text{ on } \Gamma_{[i]}\cap\Gamma_D,
\end{array}\right.
\end{equation}
 where $g_N=\textbf{n}\cdot (a\,\nabla u) + \gamma\, u \in H^{-1/2} (\Gamma_N)$, with $w_i\equiv w$ on $\Omega\setminus \Omega_i$.\\
Then $w_i=w+P_i(u-w).$
\end{lema}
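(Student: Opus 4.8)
The plan is to introduce the difference $z := w_i - w$ and to show that it coincides with $P_i(u-w)$ by checking two things: that $z$ belongs to the subspace $V_i$, and that $z$ satisfies the variational identity that defines $P_i(u-w)$. Uniqueness of the solution of that identity, guaranteed by the Lax--Milgram lemma applied on $V_i$, will then force $z = P_i(u-w)$, which is the assertion.

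First I would verify that $z \in V_i$. By hypothesis $w_i \equiv w$ on $\Omega\setminus\overline{\Omega_i}$, so $z$ vanishes there; and since $w_i, w \in V$ we have $z \in V$. By the definition of $V_i$ these two facts together give $z \in V_i$. In particular the trace of $z$ on the interface $\Gamma^{(i)}$ vanishes, as does the trace of every element of $V_i$.

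The heart of the argument is the identity
\[
\mathcal{A}(w_i, v) = \mathcal{A}(u, v), \qquad \forall\, v \in V_i .
\]
To obtain it I would exploit that every $v \in V_i$ vanishes on $\Omega\setminus\overline{\Omega_i}$, hence on $\Gamma^{(i)}$, and on $\Gamma_D$; consequently all volume integrals and the $\Gamma_N$ boundary integral in $\mathcal{A}(\cdot, v)$ localise to $\Omega_i$ and to $\Gamma_{[i]}\cap\Gamma_N$. The restriction $v|_{\Omega_i}$ is then an admissible test function for the local problem \eqref{partprob} (it vanishes on the Dirichlet part $\Gamma^{(i)}\cup(\Gamma_{[i]}\cap\Gamma_D)$ of $\partial\Omega_i$), so testing \eqref{partprob} against it yields
\[
\mathcal{A}(w_i, v) = \int_{\Omega_i} f\,v\,dx + \int_{\Gamma_{[i]}\cap\Gamma_N} g_N\, v\,ds .
\]
On the other hand, testing the global weak form \eqref{weakform} for $u$ against the same $v \in V_i$ and using the same localisation gives $\mathcal{A}(u,v)=F(v)$, with $F(v)$ reducing to exactly the same right-hand side, because the prescribed local datum $g_N = \textbf{n}\cdot(a\nabla u)+\gamma\,u$ is precisely the flux of the global solution on $\Gamma_{[i]}\cap\Gamma_N$. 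The two right-hand sides therefore agree and the identity follows.

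Subtracting $\mathcal{A}(w,v)$ from both sides gives $\mathcal{A}(z,v)=\mathcal{A}(u-w,v)$ for all $v\in V_i$, which, since $z\in V_i$, is exactly the problem defining $P_i(u-w)$. Coercivity and continuity of $\mathcal{A}$ on $V_i$ make this solution unique by Lax--Milgram, so $z=P_i(u-w)$, i.e. $w_i=w+P_i(u-w)$. I expect the only delicate point to be the core identity: one must check that the restriction of a $V_i$-function is an admissible local test function, that the convection term --- kept in conservative form $\nabla\cdot(\beta\,\cdot)$ and hence producing no extra boundary contribution --- localises cleanly, and, most importantly, that the flux-matching choice of $g_N$ is exactly what makes the two localised right-hand sides coincide.
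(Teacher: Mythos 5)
Your argument is correct and complete: identifying $z=w_i-w$ as an element of $V_i$, deriving $\mathcal{A}(w_i,v)=\mathcal{A}(u,v)$ for all $v\in V_i$ by localising both the global weak form and the local problem (the flux-matching choice of $g_N$ being exactly the point that makes the right-hand sides coincide), and concluding by Lax--Milgram on $V_i$ is the standard proof of this projection identity. Note that the paper itself gives no proof of Lemma \ref{propProj} --- it is quoted from the reference \cite{tarek} --- so there is nothing internal to compare against; your write-up supplies precisely the argument the citation points to, and you correctly flag the only delicate points (admissibility of $v|_{\Omega_i}$ as a local test function, the vanishing of traces of $V_i$-functions on $\Gamma^{(i)}$, and the conservative form of the convection term).
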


The Schwarz alternating method may now reformulated in terms of the projection operators $P_i$. By Lemma \ref{propProj} with $w_i \equiv w^{(k+\frac{i}{p})}$ and $w \equiv w^{(k+\frac{i-1}{p})}$ it follows 
\begin{equation*}
 w^{(k+\frac{i}{p})}= w^{(k+\frac{i-1}{p})}+P_i\left(u- w^{(k+\frac{i-1}{p})}\right).
\end{equation*}
Substituting this representation into the Schwarz alternating method yields the projection formulation.\\

We next formulate an approximate Schwarz alternating method in which we consider that the transmission conditions in the internal boundaries $\Gamma^{(i)}$ experience an error generated by their approximation by the off-line step stated in Section \ref{algoritmo}. We thus change $w_i=w \text{ on } \Gamma^{(i)}$ by $w_i={\mathcal{R}_i}w \text{ on } \Gamma^{(i)}$, being ${\mathcal{R}_i}$ an operator that approximates the trace on $\Omega_i$, but considered as an operator from $V$ into $V$.  To determine ${\mathcal{R}_i}$, firstly we consider the operator $\tilde{\tau}$ that is actually built in the off-line stage described in Section \ref{algoritmo}, as a mapping from \break $ H^1( \Omega) \rightarrow H^{1/2}(\Guout)\times H^{1/2}(\Gtin) $ (possibly non-linear), given by
$$
\tilde{\tau}(u)=  (\psi_1,\psi_2)\in H^{1/2}(\Guout)\times H^{1/2}(\Gtin), \,\,\mbox{ with} \,\, \psi_i=\tilde{ \tau_i}\left (\gamma_{\Guout}(u),\gamma_{\Gtin}u\right ) ,\quad \forall u\in V,
$$
where $\gamma_{\Guout}$ and $\gamma_{\Gtin}$ respectively stand for the traces on $\Guout$ and $\Gtin$.  We then define the operators $\tilde{\mathcal{R}_j}(u):V\mapsto H^{1/2}(\partial\Omega_j)$, $j=1,3$ by
$$
\tilde{\mathcal{R}_1}(u)=\psi_1 \,\,\mbox{in }\, \Guout \, \,\tilde{\mathcal{R}_1}(u) = \gamma_1(u),\,\,\mbox{in }\, \partial\Omega_1\setminus \{\Guout\};$$
$$
 \tilde{\mathcal{R}_3}(u)=\psi_2 \,\,\mbox{in }\,\Gtin, \, \,\tilde{\mathcal{R}_3}(u) = \gamma_3(u)\,\,\mbox{in }\, \partial\Omega_3\setminus \{\Gtin\},
$$
where $\gamma_j$ denotes the trace operator on $\partial \Omega_j$. If needed, $\gamma_j(u)$  should  be replaced by a smoothed trace in an arbitrarily small neighbourhood of the corners of $\partial \Omega_j$ to ensure that ${\mathcal{R}_j}(u) \in H^{1/2}(\partial \Omega_j)$. Note that $(\gamma_1 \circ \tilde{\mathcal{R}_1)}(u)=\tilde\tau_{1out}(u)$, $(\gamma_3 \circ \tilde{\mathcal{R}_3)}(u)=\tilde\tau_{3in}(u)$.

 We then in general consider for $p$ subdomains that we have trace reduction operators $ \tilde{\mathcal{R}_i}:V\mapsto H^{1/2}(\partial\Omega_i)$ similar to similar to $\tilde{\mathcal{R}_j}$, $j=1,3$ that provide approximations of the traces in $\partial \Omega_i$. We shall assume that the relative error of the reduction is uniformly bounded, that is, there exists $\mu>0$ such that
 \begin{equation}\label{estir}
 \|(\gamma_i-\tilde{\mathcal{R}_i})(u)\|_{H^{1/2}(\partial \Omega_i)}\leq \mu \cdot \|u\|_V\quad \forall \, u\in V,
 \end{equation}
 where $\gamma_i :\,V \mapsto H^{1/2}(\partial\Omega_i)$ is the trace operator on $\partial\Omega_i$.
 
 We next consider a standard bounded linear lifting operator 
 ${\varepsilon_i}: H^{1/2}(\partial \Omega_i) \rightarrow H^1(\Omega)$ such that  $(\gamma_i\circ{\varepsilon_i}) \varphi=\varphi$ for all $\varphi \in H^{1/2}(\partial \Omega_i)$.  Let us define $\mathcal{R}_i:=I-{\varepsilon_i} \circ (\gamma_i-\tilde{\mathcal{R}_i}) : \,V \mapsto V$. Then, it holds 
$$
\gamma_i({\mathcal{R}_i} v )=\gamma_i(v)-(\gamma_i\circ\varepsilon_i)( \gamma_i v- \tilde{\mathcal{R}_i}(v))=\gamma_i(v)-( \gamma_i v- \tilde{\mathcal{R}_i}(v))= \tilde{\mathcal{R}_i}(v).
$$

So, if we change the condition $w_i=w $ by $w_i={\mathcal{R}_i}w \text{ on } \Gamma^{(i)}$ in problem \eqref{partprob} (observe that the notation in \eqref{partprob} skips the trace operator), with the present notation we are imposing $\gamma_i w_i=\tilde{\mathcal{R}_i}w \text{ on } \Gamma^{(i)}$. \\

Note that Lemma \ref{propProj} holds with $w_i={\mathcal{R}}_i w+P_i(u-{\mathcal{R}}_iw)$. This allows to prove a generalisation of Theorem \ref{convergencia1} including the error due to the approximation of the transmission conditions. To prove this theorem we need some previous results, stated next. We denote by ${\cal L}(E,F)$ denote the space of bounded linear operators from a normed space $E$ into another normed space $F$. When $E=F$ we denote ${\cal L}(E,F)$ by ${\cal L}(E)$. 
\begin{lema} Under condition \eqref{estir}, there exists a constant $C_1 >0$ such that 
\begin{equation} \label{estiri}
\|(I- {\mathcal{R}}_i)(v)\|_V \le C_1\, \mu \,\|v\|_V,\,\, \forall v \in V,\quad \forall i=1,\cdots, p.
\end{equation}
\end{lema}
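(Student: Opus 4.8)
The plan is to read off the identity $I-\mathcal{R}_i=\varepsilon_i\circ(\gamma_i-\tilde{\mathcal{R}_i})$ directly from the definition $\mathcal{R}_i:=I-\varepsilon_i\circ(\gamma_i-\tilde{\mathcal{R}_i})$ given just above the statement, and then to bound this composition factor by factor. First I would write, for arbitrary $v\in V$,
\begin{equation*}
(I-\mathcal{R}_i)(v)=\bigl(\varepsilon_i\circ(\gamma_i-\tilde{\mathcal{R}_i})\bigr)(v)=\varepsilon_i\bigl((\gamma_i-\tilde{\mathcal{R}_i})(v)\bigr),
\end{equation*}
which is the crux: the seemingly composite operator $I-\mathcal{R}_i$ collapses to a single lifting of the trace-reduction defect $(\gamma_i-\tilde{\mathcal{R}_i})(v)$.

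Next I would invoke the hypothesis that each $\varepsilon_i\colon H^{1/2}(\partial\Omega_i)\to H^1(\Omega)$ is a \emph{bounded} linear lifting operator, so that $\|\varepsilon_i\|_{\mathcal{L}(H^{1/2}(\partial\Omega_i),V)}<\infty$. Applying this bound to the displayed identity gives
\begin{equation*}
\|(I-\mathcal{R}_i)(v)\|_V\le \|\varepsilon_i\|_{\mathcal{L}(H^{1/2}(\partial\Omega_i),V)}\;\|(\gamma_i-\tilde{\mathcal{R}_i})(v)\|_{H^{1/2}(\partial\Omega_i)}.
\end{equation*}
Here I am tacitly using that $\varepsilon_i$ takes values in $V$ (equivalently, that the $V$-norm coincides with the ambient $H^1(\Omega)$-norm on its range), which is consistent with $\mathcal{R}_i$ being defined as a map $V\mapsto V$.

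I would then substitute the reduction-error assumption \eqref{estir}, namely $\|(\gamma_i-\tilde{\mathcal{R}_i})(v)\|_{H^{1/2}(\partial\Omega_i)}\le\mu\,\|v\|_V$, to obtain
\begin{equation*}
\|(I-\mathcal{R}_i)(v)\|_V\le \|\varepsilon_i\|_{\mathcal{L}(H^{1/2}(\partial\Omega_i),V)}\;\mu\;\|v\|_V.
\end{equation*}
Finally, since the decomposition has only $p<\infty$ subdomains, I would set $C_1:=\max_{1\le i\le p}\|\varepsilon_i\|_{\mathcal{L}(H^{1/2}(\partial\Omega_i),V)}$, a finite constant independent of $i$ and of $v$, yielding the claimed uniform estimate.

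There is no serious obstacle here: the argument is a three-line chain of operator-norm inequalities. The only points requiring a modicum of care are the clean algebraic collapse of $I-\mathcal{R}_i$ into the single composition $\varepsilon_i\circ(\gamma_i-\tilde{\mathcal{R}_i})$, and the observation that the constant can be taken \emph{uniform} in $i$ precisely because the number of subdomains $p$ is finite, so a maximum of finitely many lifting-operator norms is attained and finite.
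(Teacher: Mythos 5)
Your proposal is correct and follows essentially the same route as the paper: identify $I-\mathcal{R}_i=\varepsilon_i\circ(\gamma_i-\tilde{\mathcal{R}_i})$ from the definition, bound by the operator norm of the lifting $\varepsilon_i$, apply \eqref{estir}, and take $C_1$ as the maximum of the finitely many lifting norms. The paper's proof is exactly this chain of inequalities, so there is nothing to add.
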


\begin{proof}
As the operators $\varepsilon_i$ are bounded, combining with \eqref{estir}
it follows
 \begin{eqnarray*}
 \|(I- {\mathcal{R}}_i)(v)\|_{V}&\leq& \|{\varepsilon_i} \|_{{\cal L}(H^{1/2}(\partial \Omega_i) ,H^1(\Omega))}\, \|\gamma_i v- \tilde{\mathcal{R}_i}(v)\|_{H^{1/2}(\partial \Omega_i)} \leq C_1\, \mu \, \|v\|_{V},
 \end{eqnarray*}
  where $C_1=\displaystyle\max_{i=1,\cdots,p}\{\|\tilde{\varepsilon_i} \|_{
{\cal L}(H^{1/2}(\partial \Omega_i) ,H^1(\Omega))}\} $.
\end{proof}
Let us recall that from \cite{tarek}, $
\|I-P_i\|_{{\cal L}(V)} <1,\,\forall i =1,\cdots, p.
$

\begin{lema} Assume that 
\begin{equation}\label{propmu}
\displaystyle \mu <\frac{1-\rho}{C_1\,\rho}, \,\,\mbox{where  } \, \rho=\displaystyle \max_{i=1,\cdots,p} \|I-P_i\|_{{\cal L}(V)}.
\end{equation}
 Then, the sequence $\{w^{(k+\frac{i}{p})}\}_{k=0,1,\cdots; i=0,1,\cdots,p}$ is bounded in $V$.
\end{lema}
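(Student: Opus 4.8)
The plan is to exploit the affine-recursion structure of the approximate Schwarz sweep and to read the hypothesis \eqref{propmu} precisely as the statement that the linear part of each fractional step is a contraction. First I would use the modified form of Lemma \ref{propProj} recorded just above, namely that the approximate iterate satisfies $w^{(k+\frac{i}{p})}=\mathcal{R}_i w^{(k+\frac{i-1}{p})}+P_i(u-\mathcal{R}_i w^{(k+\frac{i-1}{p})})$, and rewrite it in the affine form
$$
w^{(k+\frac{i}{p})}=(I-P_i)\,\mathcal{R}_i\, w^{(k+\frac{i-1}{p})}+P_i u .
$$
This isolates a linear operator $(I-P_i)\mathcal{R}_i$ acting on the previous iterate, plus a fixed element $P_i u$ that depends only on the (fixed) exact solution $u$ of \eqref{weakform} and hence not on the iteration index.

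Next I would estimate the norm of the linear part. Writing $\mathcal{R}_i=I-(I-\mathcal{R}_i)$ and invoking \eqref{estiri}, one gets $\|\mathcal{R}_i\|_{{\cal L}(V)}\le 1+C_1\mu$, whence
$$
\|(I-P_i)\mathcal{R}_i\|_{{\cal L}(V)}\le \|I-P_i\|_{{\cal L}(V)}\,(1+C_1\mu)\le \rho\,(1+C_1\mu).
$$
The decisive observation is that \eqref{propmu} is exactly equivalent to $\rho(1+C_1\mu)<1$: multiplying $\mu<(1-\rho)/(C_1\rho)$ by $C_1\rho>0$ and rearranging gives $\rho+C_1\mu\rho<1$. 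I would therefore set $q:=\rho(1+C_1\mu)\in(0,1)$. For the affine part, since $\|I-P_i\|_{{\cal L}(V)}\le\rho$ yields $\|P_i\|_{{\cal L}(V)}\le 1+\rho$, the vector $P_i u$ is bounded in $V$ by $B:=(1+\rho)\|u\|_V$, uniformly over the finitely many indices $i=1,\dots,p$.

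Finally I would close the argument with an elementary scalar recurrence. Relabelling the fractional iterates as a single sequence $x_n=w^{(k+\frac{i}{p})}$ with $n=kp+i$, the two bounds above give $\|x_{n+1}\|_V\le q\,\|x_n\|_V+B$ at every step. An induction then shows $\|x_n\|_V\le\max\{\|x_0\|_V,\;B/(1-q)\}$ for all $n$: once $\|x_n\|_V\le B/(1-q)$ one has $\|x_{n+1}\|_V\le q\,B/(1-q)+B=B/(1-q)$, while from any larger value the norm decreases toward this threshold. Summing the geometric series gives the explicit uniform bound $\|w^{(k+\frac{i}{p})}\|_V\le q^{\,kp+i}\,\|w^{(0)}\|_V+B/(1-q)$, which proves that the sequence is bounded in $V$.

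I do not expect a genuine obstacle here; the crux is recognizing that \eqref{propmu} has been calibrated precisely so that the per-step factor $q$ stays below $1$ despite the amplification $1+C_1\mu$ introduced by the reduction operator $\mathcal{R}_i$. The only mild care needed is to make the bounds on $\|\mathcal{R}_i\|_{{\cal L}(V)}$ and on $\|P_i u\|_V$ uniform across the subdomains, which follows at once from taking a maximum over the finite index set $i=1,\dots,p$.
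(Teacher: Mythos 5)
Your proof is correct and takes essentially the same route as the paper's: both start from the modified form of Lemma \ref{propProj}, both hinge on the observation that hypothesis \eqref{propmu} is exactly the condition $\rho(1+C_1\,\mu)<1$, and both close with the elementary affine scalar recurrence. The only difference is bookkeeping --- you run the recursion on $\|w^{(k+\frac{i}{p})}\|_V$ directly via $w^{(k+\frac{i}{p})}=(I-P_i)\mathcal{R}_i w^{(k+\frac{i-1}{p})}+P_i u$, whereas the paper runs it on the error $\|u-w^{(k+\frac{i}{p})}\|_V$ and recovers boundedness of the iterates from a ball centred at $u$; these are the same identity rearranged (just note that since $\mathcal{R}_i$ may be non-linear, your bound on $\mathcal{R}_i$ should be read as the pointwise estimate $\|\mathcal{R}_i v\|_V\le (1+C_1\mu)\|v\|_V$ rather than as an operator norm).
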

\begin{proof} 
According to Lemma \ref{propProj}, with $w_i = w^{(k+\frac{i}{p})}$, $w = {\mathcal{R}_i}w^{(k+\frac{i-1}{p})}$, we have:
\begin{equation*}
w^{(k+\frac{i}{p})}={\mathcal{R}_i}w^{(k+\frac{i-1}{p})}+P_i(u-{\mathcal{R}_i}w^{(k+\frac{i-1}{p})}), \quad \text{ for all } i=1,\ldots,p.
\end{equation*}
Using the linearity of the operator $P_i$ and subtracting and adding $w^{(k+\frac{i-1}{p})}$ and $u$, we can write:

\begin{equation} \label{errores}
\begin{split}
u-w^{(k+\frac{i}{p})} &=u-{\mathcal{R}_i}w^{(k+\frac{i-1}{p})}-P_i u+P_i{\mathcal{R}_i}w^{(k+\frac{i-1}{p})}=(I-P_i)(u-{\mathcal{R}_i}w^{(k+\frac{i-1}{p})}) \\
&= (I-P_i)(u-{\mathcal{R}_i}w^{(k+\frac{i-1}{p})}-w^{(k+\frac{i-1}{p})}+w^{(k+\frac{i-1}{p})})\\
&=(I-P_i)(u-w^{(k+\frac{i-1}{p})})+(I-P_i)(I-{\mathcal{R}_i})w^{(k+\frac{i-1}{p})},
\end{split}
\end{equation}
for each $i=1,\ldots,p$. 
Let $\tau=\rho\,(1+C_1\, \mu)$. Due to \eqref{propmu}, $\tau <1$. Denote $ e^{(k+\frac{i}{p})}= u-w^{(k+\frac{i}{p})}$. Then, using \eqref{estiri},
\begin{eqnarray*}
 \|e^{(k+\frac{i}{p})}\|_V &\le& \|I-P_i\|_{{\cal L}(V)}\,  (\|e^{(k+\frac{i-1}{p})}\|_V+ \|(I-{\mathcal{R}_i})w^{(k+\frac{i-1}{p})}\|_V )\nonumber \\
 &\le& \rho\, (\|e^{(k+\frac{i-1}{p})}\|_V+ C_1\,\mu\,\|w^{(k+\frac{i-1}{p})}\|_V )\le \tau \, \|e^{(k+\frac{i-1}{p})}\|_V+ C_1\,\rho\, \mu\,\|u\|_V )\nonumber \\
 &\le& ... \le \tau^{p(k+1)}\, \|e^{(0)}\|_V+ C_1\,\rho\, \mu\,(\tau^{p(k+1)-1}+\cdots+\tau^2+\tau+1)\, \|u\|_V )\\
 &\le&  R:=\|e^{(0)}\|_V+ C_1\,\rho\, \displaystyle \frac{1}{1-\tau}\, \|u\|_V ,
 \end{eqnarray*}
 where to obtain the third inequality we have added and substracted $u$ to $w^{(k+\frac{i-1}{p})}$, and we have used recursively the third estimate to obtain the fifth one. We thus conclude that the sequence $\{w^{(k+\frac{i}{p})}\}_{k=0,1,\cdots; i=0,1,\cdots,p}$ lies in a ball in $V$ of center $u$ and radious $R$.

\end{proof}

 The following generalisation of Theorem \ref{convergencia1} holds:\\

\begin{teorema}
 Assume that estimates \eqref{estir} and \eqref{propmu} hold. Then we have:
\begin{equation*}
    \|u-w^{(k)}\|_V \leq \rho^{k}\|u-w^{(0)}\|_V+\sigma \, \frac{\rho}{1-\rho}\,\mu,
\end{equation*}
for some $\sigma>0$ .
\end{teorema}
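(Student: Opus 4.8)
The plan is to iterate the one-step error identity \eqref{errores} that has already been established, convert it into a scalar recursion for the error norms, and then sum the resulting geometric series. I would begin by writing $e^{(k+\frac{i}{p})}=u-w^{(k+\frac{i}{p})}$ and recalling from \eqref{errores} that
\[
e^{(k+\frac{i}{p})}=(I-P_i)\,e^{(k+\frac{i-1}{p})}+(I-P_i)(I-\mathcal{R}_i)\,w^{(k+\frac{i-1}{p})}.
\]
Taking $V$-norms, using $\|I-P_i\|_{\mathcal{L}(V)}\le\rho$, the reduction estimate \eqref{estiri}, and the uniform bound $M:=\|u\|_V+R$ on $\|w^{(k+\frac{i-1}{p})}\|_V$ furnished by the preceding boundedness lemma (which gives $\|w^{(k+\frac{i-1}{p})}\|_V\le\|u\|_V+\|e^{(k+\frac{i-1}{p})}\|_V\le\|u\|_V+R$), I obtain the scalar recursion
\[
\|e^{(k+\frac{i}{p})}\|_V\le\rho\,\|e^{(k+\frac{i-1}{p})}\|_V+\rho\,C_1\,M\,\mu,
\]
valid uniformly in $k$ and $i$.

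Next I would treat this as a single recursion indexed by the fractional-step counter $n=pk+i$, so that $\|e_n\|_V\le\rho\,\|e_{n-1}\|_V+b$ with $b=\rho\,C_1\,M\,\mu$. Unrolling it yields $\|e_n\|_V\le\rho^{n}\|e_0\|_V+b\,(1+\rho+\cdots+\rho^{\,n-1})$, and bounding the finite geometric sum by its limit $1/(1-\rho)$ gives, at the end of the $k$-th sweep (that is, $n=pk$),
\[
\|u-w^{(k)}\|_V\le\rho^{pk}\,\|u-w^{(0)}\|_V+\frac{\rho\,C_1\,M}{1-\rho}\,\mu.
\]
To match the stated form I would use $\rho\in(0,1)$ and $p\ge1$ to bound $\rho^{pk}\le\rho^{k}$, and then set $\sigma:=C_1\,M$, obtaining exactly $\|u-w^{(k)}\|_V\le\rho^{k}\|u-w^{(0)}\|_V+\sigma\,\frac{\rho}{1-\rho}\,\mu$.

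The genuinely load-bearing ingredient, and the step I would single out as the crux, is the uniform bound $M$ on the whole iterate sequence $\{w^{(k+\frac{i}{p})}\}$: without it the perturbation term $\rho\,C_1\,\mu\,\|w^{(k+\frac{i-1}{p})}\|_V$ could not be controlled by a constant and the geometric summation would collapse. This is precisely what the preceding lemma supplies, and it is there that hypothesis \eqref{propmu} (equivalently $\tau=\rho(1+C_1\mu)<1$) is essential, since it prevents the perturbed iteration from amplifying the iterates geometrically. Everything else is the routine telescoping of a contractive affine recursion; the only mild care required is reconciling the per-fractional-step factor $\rho=\max_i\|I-P_i\|_{\mathcal{L}(V)}$ with the per-sweep factor implicit in Theorem \ref{convergencia1}, which is harmless here because $\rho^{pk}\le\rho^{k}$.
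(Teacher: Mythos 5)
Your proposal is correct and follows essentially the same route as the paper: the one-step error identity \eqref{errores}, the operator bound $\|I-P_i\|_{\mathcal{L}(V)}\le\rho$, the boundedness lemma for the iterates (which is indeed the load-bearing ingredient, exactly as you identify), and a geometric summation. The only cosmetic difference is that you unroll the affine recursion at the level of individual fractional steps and then restrict to $n=pk$, whereas the paper first composes the $p$ fractional-step identities into a per-sweep identity involving the operators $T_i=(I-P_i)\cdots(I-P_p)$ and then unrolls over sweeps; both yield the stated estimate, yours with the slightly sharper leading factor $\rho^{pk}\le\rho^{k}$ and a different (equally admissible) constant $\sigma$.
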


\begin{proof}

Writing the last identity in \eqref{errores} for $i=1,\ldots p$, we have:
\begin{equation*}
\begin{split}
    u-w^{(k+\frac{1}{p})}&=(I-P_1)(u-w^{(k)})+(I-P_1)(I-\mathcal{R}_1)w^{(k)}.\\
 u-w^{(k+\frac{2}{p})}&=(I-P_2)(u-w^{(k+\frac{1}{p})})+(I-P_2)(I-\mathcal{R}_2)w^{(k+\frac{1}{p})}.\\
\vdots  \\
 u-w^{(k+1)}&=(I-P_p)(u-w^{(k+\frac{p-1}{p})})+(I-P_p)(I-\mathcal{R}_p)w^{(k+\frac{p-1}{p})}.
\end{split}
\end{equation*}

Therefore
\begin{equation*}
 u-w^{(k+\frac{2}{p})}=(I-P_2)(I-P_1)(u-w^{(k)})+(I-P_2)(I-P_1)(I-\mathcal{R}_1)w^{(k)}+(I-P_2)(I-\mathcal{R}_2)w^{(k+\frac{1}{p})},
\end{equation*}
and applyig recursivity:
\begin{equation}\label{operadores}
\begin{split}
u-w^{(k+1)}&=(I-P_p)\cdots(I-P_2)(I-P_1)(u-w^{(k)})+(I-P_p)\cdots(I-P_2)(I-P_1)(I-\mathcal{R}_1)w^{(k)} \\
&+(I-P_p)\cdots(I-P_2)(I-\mathcal{R}_2)w^{(k+\frac{1}{p})}+\cdots+(I-P_p)(I-\mathcal{R}_p)w^{(k+\frac{p-1}{p})}.
\end{split}
\end{equation}
Consequently, it follows that

\begin{equation*}
u-w^{(k+1)}=T_1(u-w^{(k)})+\sum_{i=1}^p T_i  (I-{\mathcal{R}_i}) w^{(k+\frac{i-1}{p})},
\end{equation*}
where $T_i=(I-P_i)(I-P_{i+1})\cdots (I-P_p), \quad \forall i=1,\ldots, p$. 

Using \eqref{estir}, we have
\begin{equation*}
\|u-w^{(k+1)}\|_V \leq \|T_1\|_{\mathscr{L}(V)}\|u-w^{(k)}\|_V+ C_1\, \mu\, \sum_{i=1}^p \|T_i\|_{\mathscr{L}(V)}  \|w^{(k+\frac{i-1}{p})}\|_V
\end{equation*}
As $\|T_i\|_{\mathscr{L}(V)}\leq \rho$ for all 
$i=1,\ldots,p$, then
\begin{equation*}
\|u-w^{(k+1)}\|_V \leq \rho\, \|u-w^{(k)}\|_V+\rho \, C_1\, \mu \cdot\sum_{i=1}^p \|w^{(k+\frac{i-1}{p})}\|_V.
\end{equation*}
the sequence $\{w^{(k+\frac{i}{p})}\}_{k=0,1,\cdots; i=0,1,\cdots,p}$ is bounded in $V$, there exists $C>0$ such that 
\begin{equation*}
\|u-w^{(k+1)}\|_V \leq \rho \,\|u-w^{(k)}\|_V+\rho \,\sigma\,\mu,
\end{equation*}
where $\sigma:= C\,C_1\, p   $. Denoting $\alpha_{k+i}=\|u-w^{(k+i)}\|_V (i=0,1)$ we have $\alpha_{k+1}\leq \rho\alpha_k+\rho \tau$, with $\tau = \sigma\,\mu$. Recursively using this inequality it follows
\begin{equation*}
\begin{split}
\alpha_{k+1} &\leq \rho\alpha_k+\rho \tau \leq \rho (\rho \alpha_{k-1}+\rho\tau)+\rho \tau=\rho^2\alpha_{k-1}+(\rho^2+\rho)\tau=\rho^2(\rho \alpha_{k-2}+\rho \tau)+(\rho^2+\rho)\tau \\
&=\rho^3\alpha_{k-2}+(\rho^3+\rho^2+\rho)\tau 
\leq \cdots \leq \rho^{k+1}\alpha_0+(\rho^{k+1}+\rho^k+\cdots +\rho)\tau \\
&=\rho^{k+1}\alpha_0+\rho(1+\rho+\cdots+\rho^k)\tau=\rho^{k+1}\alpha_0+\rho\frac{1-\rho^{k+1}}{1-\rho} \tau.
\end{split}
\end{equation*}
Then, we arrive at the targeted estimate,
\begin{equation*}
    \|u-w^{(k+1)}\|_V \leq \rho^{k+1}\|u-w^{(0)}\|_V+\sigma\,\frac{\rho}{1-\rho}\, \mu.
\end{equation*}
\end{proof}
\begin{remark}
This result states that if there is a uniformly bounded perturbation of the transmission conditions, the error due to the Schwarz alternating method is essentially due to the perturbation, and with size of the same order, after a number large enough of iterations.
\end{remark}
\begin{remark}
Note that in the previous proof the uniform relative reduction error estimate \eqref{estir} for the trace reduction may be limited to a bounded set that contains the iterates \break $\{w^{(k+\frac{i}{p})}\}_{k=0,1,\cdots; i=0,1,\cdots,p}$.
\end{remark}

\section{Numerical Experiments}\label{se:numex}
In this section we test  the practical performances of the reduced Schwarz algorithm that we have introduced in the preceding section. We solve the advection-diffusion problem \eqref{problema} in the 2D pipe-like domain 
$$\Omega=(0,5) \times (0,40).$$
We assume there is no source term, that is, $f=0$, and that we have an input parabolic profile $g(x)=\frac{4}{25}x(5-x)$. We take  $\varepsilon=1$, and $\beta=(P,0.2)$, then the advection velocity has a non-vanishing vertical component. Then the P\'eclet number is 
$$
Pe= 5\,\sqrt{P^2+0.2^2}\simeq 5P.
$$

To apply the Schwarz method, we take $\Omega_1=(0,12)\times (0,5)$, $\Omega_2=(7,31)\times (0,5)$, $\Omega_3=(26,40)\times (0,5)$, so the overlapping length between $\Omega_1$ and $\Omega_2$ and also between $\Omega_2$ and $\Omega_1$ is $5$. We also take as initial condition $u_2=0.5*u$, where $u$ is the solution of problem \eqref{problema}, to keep the Schwarz iterates within some neighbourhood of the exact solution. In practice we solve transient problems, that lead to advection-reaction-diffusion problems after time iterations. To solve these problems the solution at the preceding step can be used as initial condition for the Schwarz iterates.\\

We have solved problem \eqref{problema} and the Schwarz iterates appearing in \eqref{ecomega13} and \eqref{ecomega2} with the finite element method, using piecewise linear elements and structured triangulations. The triangulations have been built in such a way that these coincide on the overlapping subdomains. The number of grid nodes is 15251 for $\Omega_1$, 10201 for $\Omega_2$ and 30401 for $\Omega_3$. We have used the FreeFem++ code to perform the computations. \\

We next describe the practical implementation of the off-line stage. We have considered two inner products to build the POD expansions on a given piece of boundary $\Gamma$ (either $\Gdin,\, \Gdout,\, \Guout$ or $\Gtin$): The $L^2(\Gamma)$ inner product, and a discrete $H^1(\Gamma)$ inner product, computed as follows. Assume that the grid nodes on $\Gamma$ are $\{(\overline x,y_k)\}_{k=0}^m$, for some $\overline x \in {\mathbb R}$, where we suppose $y_i=k \,h$, $k=0,\cdots,m$ with $h=5/m$. Then the discrete $H^1(\Gamma)$ inner product is defined as
$$
((v_h, w_h))=h\, \sum_{k=0}^m v_h(\overline x,y_k)\, w_h(\overline x,y_k) + h\, \sum_{k=1}^m \frac{v_h(\overline x,y_k)- v_h(\overline x,y_{k-1})}{h}  \,\frac{w_h(\overline x,y_k)- w_h(\overline x,y_{k-1})}{h} ,
$$
for any $v_h$, $w_h \in C^0(\overline{\Gamma})$. The natural choice for this inner product would have been the $H^{1/2}(\Gamma)$ one, but as this is quite complex to compute, we have preferred to use the above discrete $H^1(\Gamma)$ one for its simplicity.
\\

{\bf Step 1: Construction of POD basis of trace varieties:}
We take $P\in\mathcal{D}_{train}$, where $\mathcal{D}_{train}$ is a uniform partition with 50 nodes of the interval $\mathcal{D}=[5,14]$. We set $\ell$ as the number of basis functions that ensure that the rate of energy kept by the truncated POD expansion up to $l$ modes satisfies $\varepsilon(\ell)<1-\sigma$, where $\sigma=10^{-5}$. We obtain $\ell=2$ on all the boundaries $\Guout,\, \Gdin, \,\Gdout$ and $\Gtin$.\\

{\bf Step 2: Construction of POD trace expansions coefficients:} The coefficients $\{\alpha_r^{(j)}\}_{j=1}^\ell$ given by \eqref{podcoefs1} belong, with respect to the discrete $H^1(\Gamma)$ inner product, to the following intervals: $\alpha_{2in}^{(1)}\in[4.9876,7.0792], \alpha_{2in}^{(2)}\in[-0.1156,0.1346], \alpha_{2out}^{(1)}\in[0.7404,3.5738], \alpha_{2out}^{(2)}\in[-0.0144,0.0071]$. Similarly, with respect to the discrete $L^2(\Gamma)$ inner product: $\alpha_{2in}^{(1)}\in[4.2147,5.9895], \alpha_{2in}^{(2)}\in[-0.0707,0.0837], \alpha_{2out}^{(1)}\in[0.6248,3.0171], \alpha_{2out}^{(2)}\in[-0.0090,0.0045]$.
We have taken in each of the intervals $[\beta_{2in}^{(1)},\nu_{2in}^{(1)}]$, $[\beta_{2out}^{(1)},\nu_{2out}^{(1)}]$ into seven equally spaced values, $[\beta_{2in}^{(2)},\nu_{2in}^{(2)}]$, $[\beta_{2out}^{(2)},\nu_{2out}^{(2)}]$  into three equally spaced values and $\tilde{\mathcal{D}}$ is a uniform partition of 30 nodes of $\mathcal{D}$. Then, solve problem \eqref{pbref2} with the boundary conditions that appear therein.
\\

{\bf Step 3: Construction of approximate trace mappings:}
Finally, to approximate the mapping $\tau:(\adin^{(k)},\adout^{(k)}) \longrightarrow (\auout^{(k)} ,\atin^{(k)})$ for $k=1,2$, we have used an artificial neural network with a hidden layer of ten intermediate neurons and the activation function is the sigmoid function. We have used the Matlab R2022a code to perform the computations.\\

{\bf Online stage:} Using the above off-line procedure and data, we solve the reduced Schwarz method \eqref{algored} using as initial condition $u_1^{0}\big\vert_{\Guout}=u_3^{0}\big\vert_{ \Gtin}=0.5*u$. We stop the iterative process when the difference between two consecutive approximate solutions has a norm in $H^1$ less than $\varepsilon=10^{-9}$:
\begin{equation*}
    \|u_1^{(k+1)}-u_1^{(k)}\|_{H^1(\Omega_1)}+\|u_3^{(k+1)}-u_3^{(k)}\|_{H^1(\Omega_3)} < \varepsilon.
\end{equation*}

In Figure \ref{graficaerror1} we show the respective relative errors of the solution, $e_h=u_h - u$ measured in the norms
$$
\|e_h\|_{L^2(\Omega_i), rel}= \frac{ \|e_h\|_{L^2(\Omega_i)}}{\|u\|_{L^2(\Omega_i)}},\quad i=1,3,
$$
 computed for one hundred uniformly-spaced Péclet numbers in $[5,14]$. We show the errors corresponding to the POD expansions of Step 1 in the off-line stage computed with both $L^2(\Gamma)$ and the discrete $H^1(\Gamma)$ inner products. \\

It can be observed that considering the POD with respect to the discrete $H^1(\Gamma)$ inner product, these errors are smaller than those obtained with the POD w.r.t. the $L^2(\Gamma)$ inner product. Concretely, we approximately obtain an error reduction of $25\%$ in $\Omega_1$ and $10\%$ in $\Omega_3$.  The error reduction possibly occurs because the advection is a directional derivative, so the $H^1(\Gamma)$ norm will provide a better approximation.The larger error in $\Omega_3$ also is likely due to a decreasing in the precision of the reduced trace mapping produced by advective effects.\\

\begin{figure}[h!]
    \centering
    \includegraphics[width=8cm]{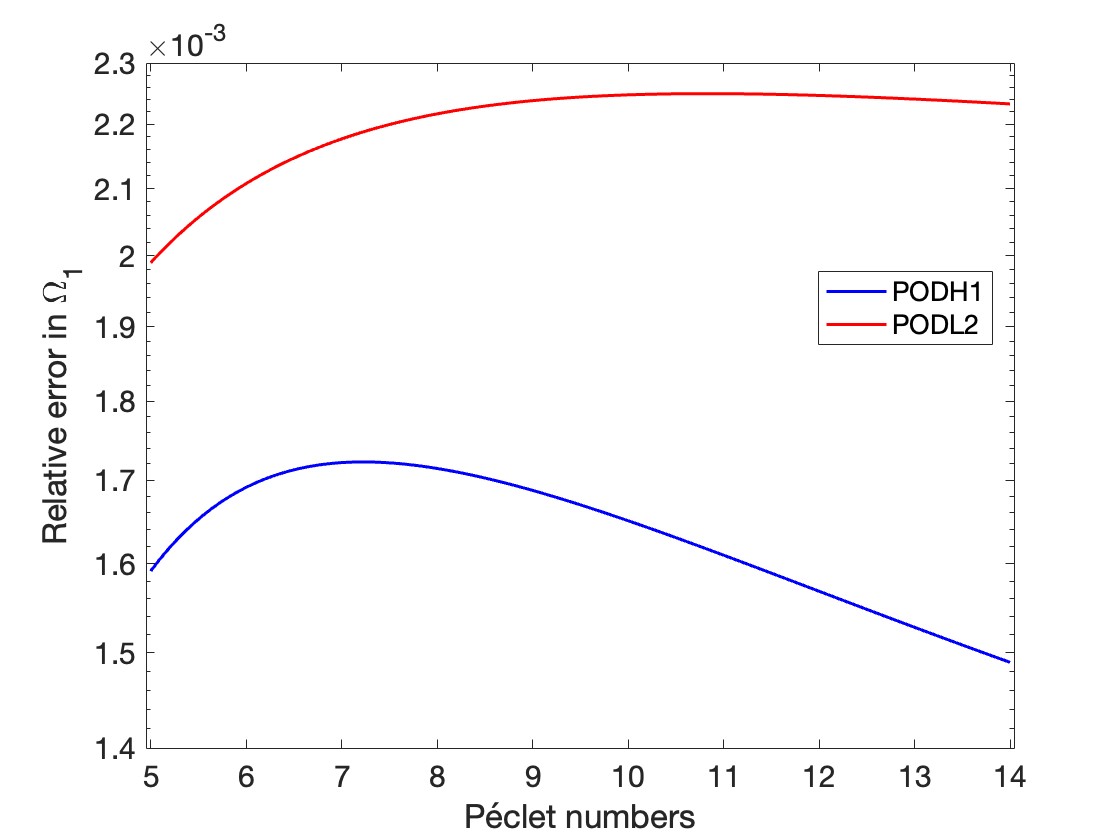}
        \includegraphics[width=8cm]{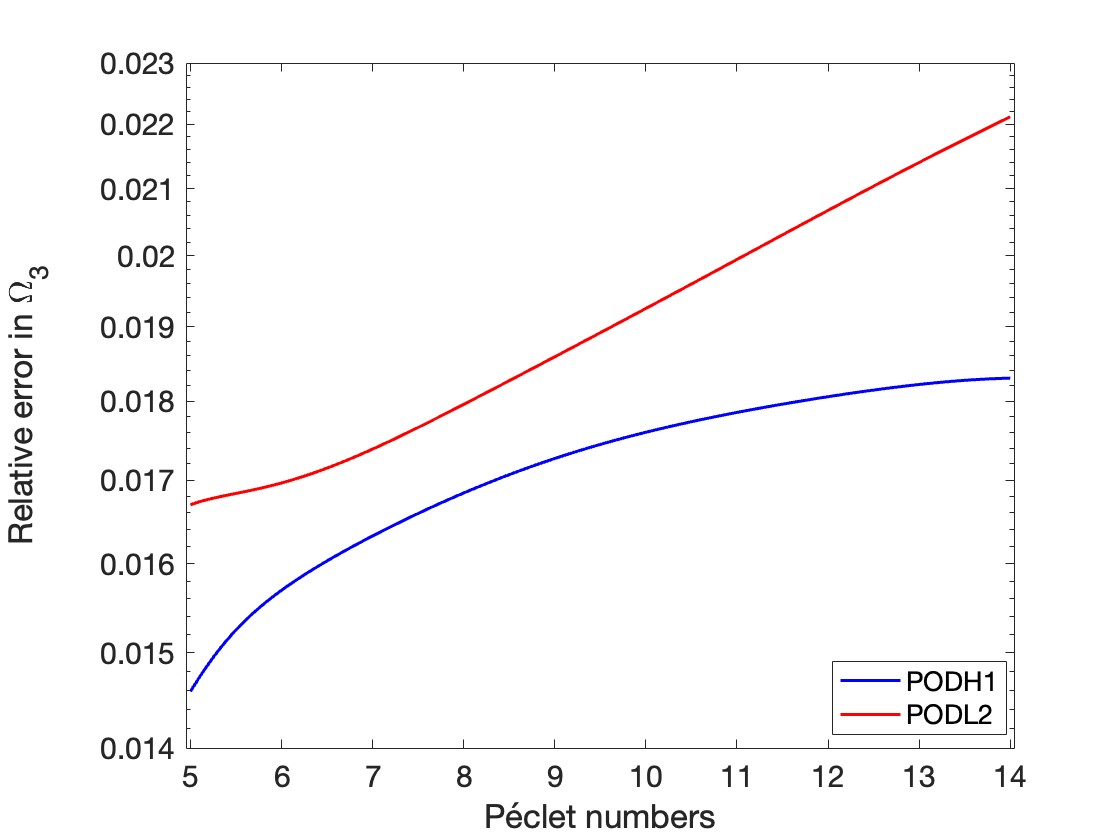}
    \caption{Reduced Schwarz algorithm. Relative errors in $L^2(\Omega_1)$ (left) and $L^2(\Omega_3)$ (right).}
    \label{graficaerror1}
\end{figure}


Table \ref{tab:errores} shows the relative errors (with POD in $H^1(\Gamma)$ inner product) for some Péclet numbers that do not belong to the training set. We observe that in all cases the relative errors are below $2\%$.\\

{\bf Checking the error reduction procedures.}Table \ref{tab:erroresin} displays the relative errors for the same P\'eclet numbers, but without the two procedures introduced to increase the accuracy of the reduced Schwarz mapping: here we use the $L^2(\Gamma)$, instead of the $H^1(\Gamma)$ inner product, to build the PODs; moreover, we skip the Step 2 of the off-line stage, that is, we do not enrich the sampling of the latent variables to build the discrete trace mapping $\tilde{\tau}$. We observe that the errors are much larger, particularly in subdomain $\Omega_3$, where the errors reach nearly $25\%$.\\

\begin{table}[ht]
\begin{center}
\begin{tabular}{| c | c | c |}
\hline
$Pe$ & $L^2(\Omega_1)$ & $L^2(\Omega_3)$ \\ \hline
5.09091       & 1.60456E-3 & 1.47253E-2 \\
6.18182  & 1.70066E-3  & 1.58263E-2 \\
7.27273 & 1.72251E-3 & 1.64748E-2 \\
8.36364 & 1.70619E-3  & 1.70081E-2\\
9.45455  & 1.67146E-3 & 1.74302E-2\\
10.5455  & 1.62873E-3  & 1.77462E-2\\
11.6364 & 1.58349E-3  & 1.79895E-2\\
12.7273 & 1.53883E-3  & 1.81814E-2\\
13.9091 & 1.49307E-3 & 1.83006E-2\\
\hline
\end{tabular}
\caption{Reduced Schwarz algorithm. Relative errors for some trial Péclet numbers.}
\label{tab:errores}
\end{center}
\end{table}

\begin{table}[ht]
\begin{center}
\begin{tabular}{| c | c | c |}
\hline
$Pe$ & $L^2(\Omega_1)$ & $L^2(\Omega_3)$ \\ \hline
5.09091       & 1.6004E-3 & 1.75438E-2 \\
6.18182  & 1.65095E-3  & 1.59138E-2 \\
7.27273 & 1.799696E-3 & 2.27712E-2 \\
8.36364 & 2.24054E-3  & 5.98679E-2\\
9.45455  & 2.29640E-3 & 9.95777E-2\\
10.5455  & 3.78726E-3  & 1.14010E-1\\
11.6364 & 4.58634E-3  & 1.17816E-1\\
12.7273 & 5.30954E-3  & 2.12517E-1\\
13.9091 & 5.99227E-3 & 2.45307E-1\\
\hline
\end{tabular}
\caption{Reduced Schwarz algorithm. Relative errors for trial Péclet numbers, without error decreasing procedures.}
\label{tab:erroresin}
\end{center}
\end{table}
{\bf Checking the error dependency of the overlapping length.} In order to check the error dependency on the the overlap size, we have performed the aforementioned calculations taking now $\Omega_1=(0,17)\times (0,5)$, $\Omega_2=(7,31)\times (0,5)$, $\Omega_3=(21,40)\times (0,5)$. Namely, the overlapping length between $\Omega_1$ and $\Omega_2$ and also between $\Omega_2$ and $\Omega_1$ is $10$. In Figure \ref{graficaerror1_solap10} we observe that the errors are somewhat smaller than in the case of the overlapping with length 10 (approximately 50\% in $\Omega_1$ and $75\% in \Omega_3$). We observe that in this case using the $H^1(\Gamma)$ inner product instead of the $L^2(\Gamma)$ one to build the POD analysis still improves the errors in $\Omega_3$, possibly again due to a better treatment of the convective effects. Thus, there is no need of a large overlapping to obtain acceptable errors from a practical point of view.

\begin{figure}[h!]
    \centering
    \includegraphics[width=8cm]{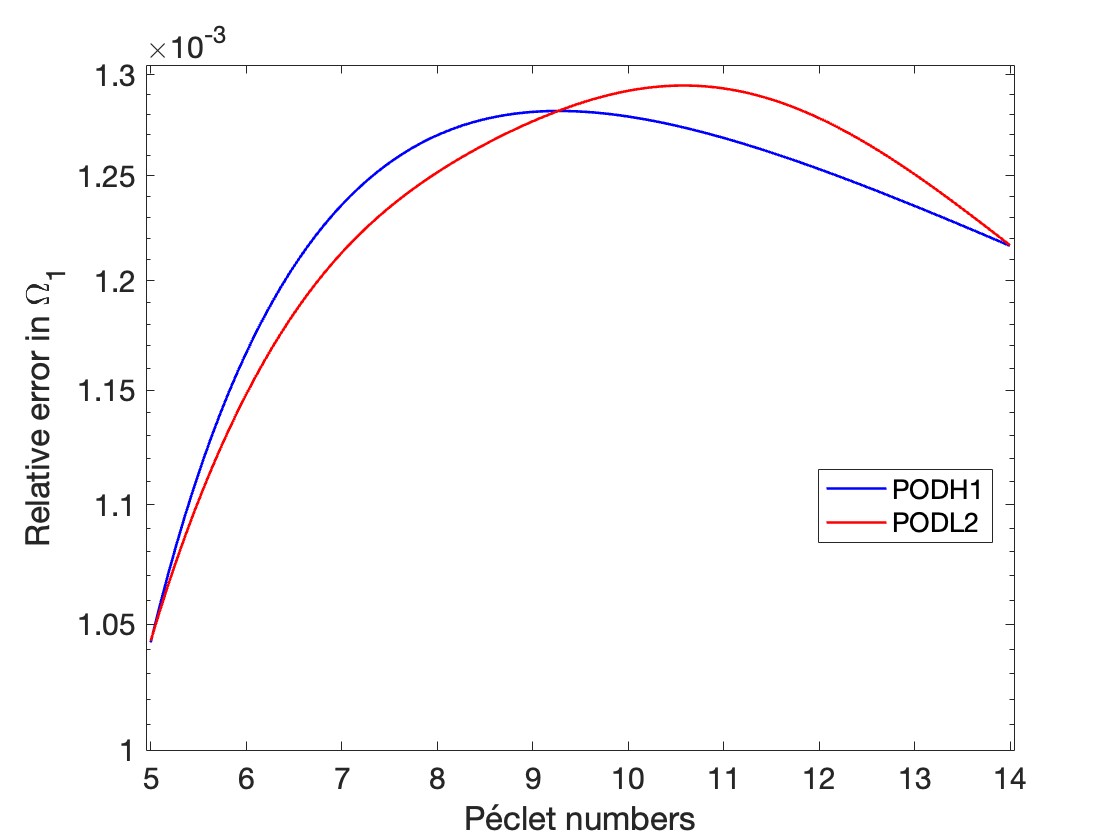}
        \includegraphics[width=8cm]{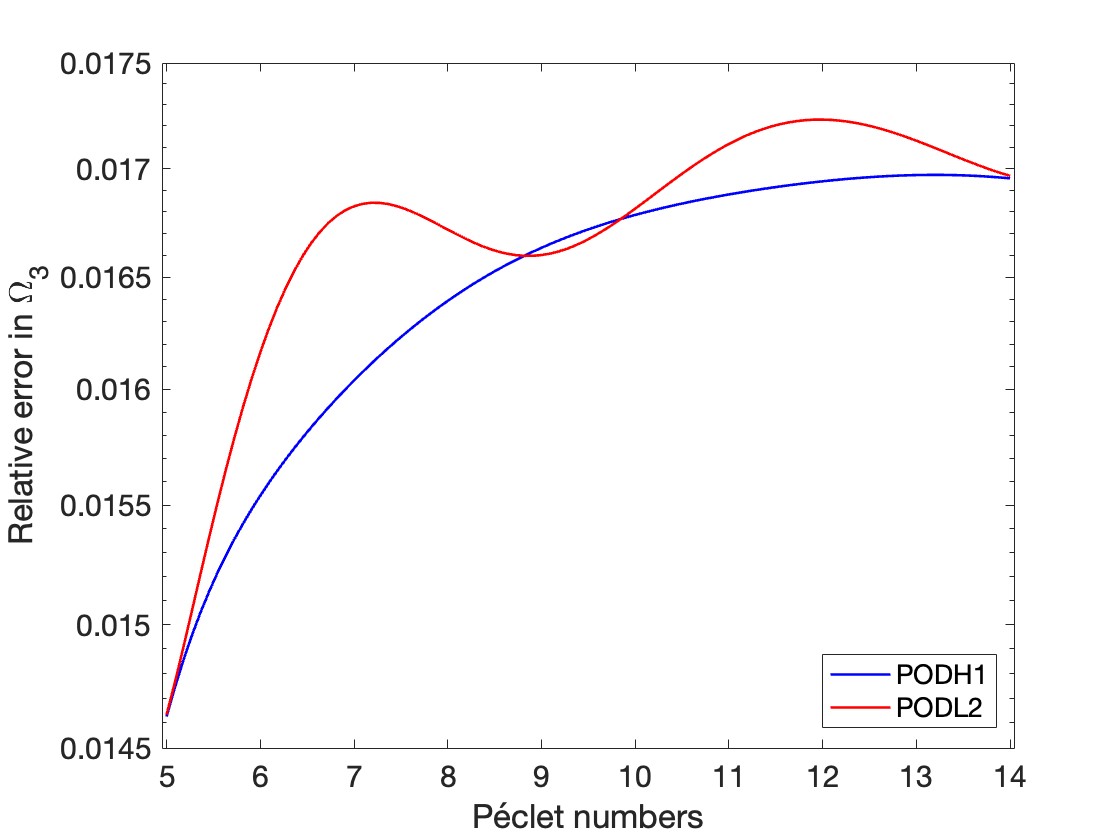}
    \caption{Reduced Schwarz algorithm. Relative errors in $L^2(\Omega_1)$ (left) and $L^2(\Omega_3)$ (right).}
    \label{graficaerror1_solap10}
\end{figure}


{\bf Extrapolation to parameter values outside training range.} We have finally tested the errors due to extrapolation of the reduced Schwarz procedure to values of $Pe$ outside the training set ${\mathcal D}$, we present the results in Figure \ref{graficaextrapol1}. We observe that the relative errors in $\Omega_1$ and $\Omega_3$ continue to be below $2\%$ for Péclet numbers approximately belonging to the interval $[3,17]$ and $[4,16]$, respectively. Beyond these values the errors experience a fast increase, as is standard in ROM of parametric problems.\\

{\bf Computational time speed-up} Regarding computational time, the off-line stage requires 1'43 CPU hours in a MacBook Pro M1 2020 laptop computer. However, the reduced Schwarz method provides a speed-up which is asymptotically increasing with respect to the number of nodes in the grid of $\Omega_2$. Indeed, the solution of problem \eqref{ecomega2} by iterative methods requires an amount of operations that scales as the number of nodes in the grid of $\Omega_2$, while the computation of the approximate trace mapping $\tilde \tau$ requires a number of operations that scales as the number of grid nodes in the vertical boundaries of $\Omega_2$.\\

\begin{figure}[h!]
    \centering
    \includegraphics[width=8cm]{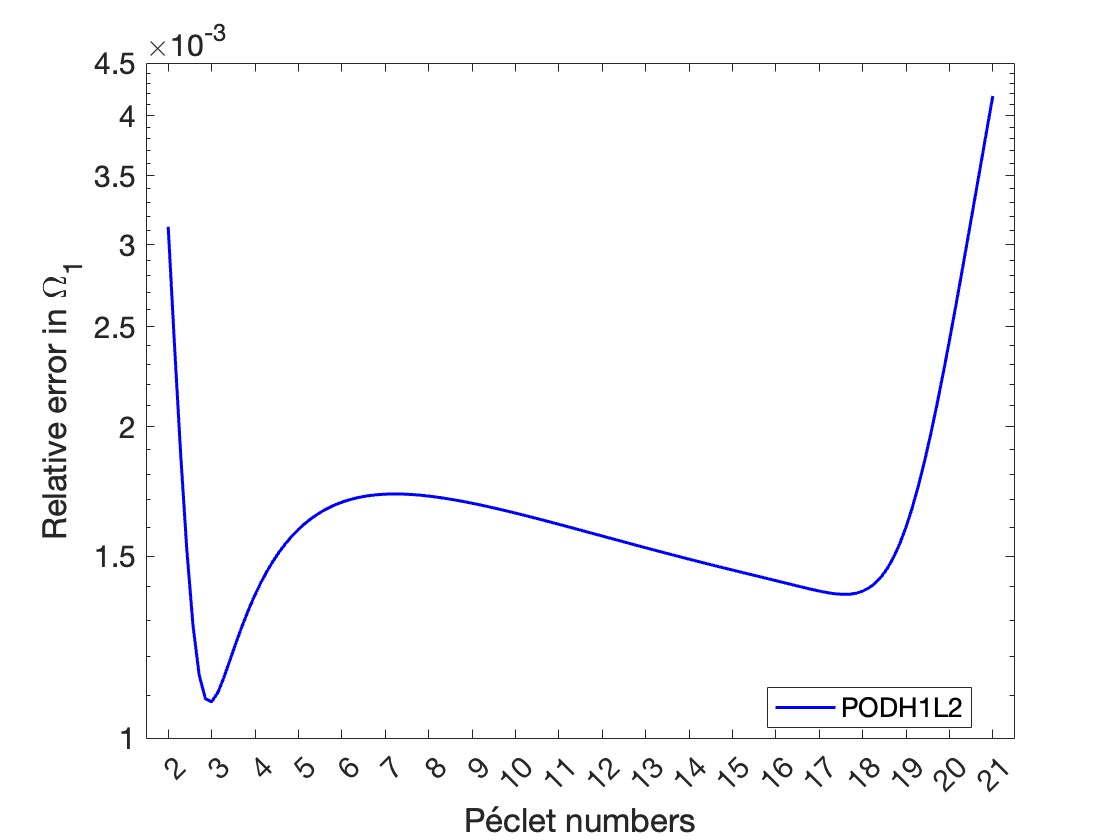}
        \includegraphics[width=8cm]{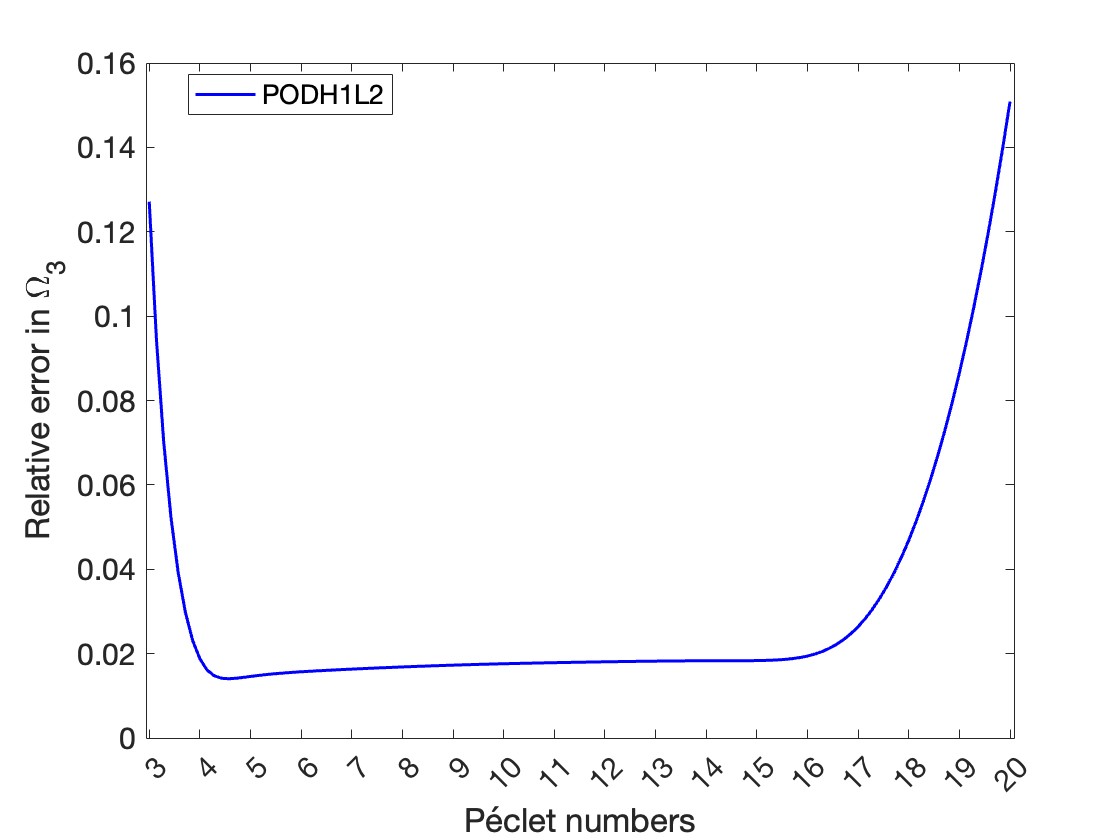}
    \caption{Extrapolation of reduced Schwarz algorithm. Relative errors in $L^2(\Omega_1)$ (left) and $L^2(\Omega_3)$ (right).}
    \label{graficaextrapol1}
\end{figure}

%

\section{Conclusions} \label{se:concl}
We present in this paper the results of a research motivated by the need of a very fast solution of thermal flow in solar receivers. These receivers are composed by a large number of parallel pipes with the same geometry, and connected flow at inflow and outflow boundaries. The temperature in each pipe satisfies an advection-diffusion equation with parametric-dependent boundary conditions. We are so led to the fast solution of parametric advection-diffusion equations on a pipe-like domain.\\

We have introduced a reduced Schwarz algorithm that skips the computation in a large part of the domain, requiring only the solution of the advection-diffusion equation in two small domains, close to the inflow and outflow boundaries. The computation of the temperature in the skep domain is replaced by reduced input temperature $\mapsto$ output temperature trace mapping. This reduced mapping is computed in an off-line stage. This stage uses the information of a number of full-order Schwarz algorithm runs to build reduced spaces to approximate the traces via POD analysis. An artificial neuronal network is used to compute the input $\mapsto$ output mapping of the latent variables.\\

We have performed an error analysis of the reduced Schwarz algorithm for quite general linear advection-reaction-diffusion equations. We have proved that the error is bounded in terms of the linearly decreasing error of the standard Schwarz algorithm, plus the error stemming from the reduction of the trace mapping. The last error is asymptotically dominant in the Schwarz iterative process.\\

We have obtained a reduction of the errors by three procedures. First, by enlarging the overlapping between subdomains. Second, by enriching the sampling in the off-line stage, that consists in solving the advection-diffusion problem in the domain to be omited, in order to enrich the training data on the discrete trace mapping. And third,  by POD analysis with respect to stronger norms than the $L^2$ norm on the active boundaries in the Schwarz iterative process, actually with respect to discrete $H^1$ norms. We obtain $L^2$ errors below $2\%$ with relatively small overlapping lengths ($12'5\%$), even in a somewhat larger parameter domain than the training one, acceptable for engineering applications.\\

At present is in progress the derivation of sampling techniques that would reduce the rather large off-line computational cost, in combination with the search for further procedures to decrease the error. Also, the application to non-linear thermal flow problems of the reduced Schwarz method, following the original motivation of this work.\\

{\bf Acknowledgements.} This research has been partially supported by the Marie Sklodowska-Curie Action "Accurate ROMs for Industrial Applications" 872442 - ARIA project, by the Spanish Government - Feder Fund Grant PID2021-123153OB-C21 and by the Spanish government CDTI-Misiones project CDTI project TRANSFER.

\end{document}